\newtheorem{thm}{Theorem}[section]
\newtheorem{cor}[thm]{Corollary}
\newtheorem{prop}[thm]{Proposition}
\newtheorem{ex}[thm]{Example}
\newtheorem{defn}[thm]{Definition}
\numberwithin{equation}{subsection}
\numberwithin{equation}{section}
\def\Irr{\mathbf{Irr}}
\def\cG{\mathcal{G}}
\def\cB{\mathcal{B}}
\def\fL{\mathfrak{L}}
\def\cS{\mathcal{S}}
\def\cT{\mathcal{T}}
\def\fg{\mathfrak{g}}
\def\fs{\mathfrak{s}}
\def\fT{\mathfrak{T}}
\def\fsl{\mathfrak{sl}}
\def\fso{\mathfrak{so}}
\def\Ad{\mathrm{Ad}}
\def\Deg{\mathrm{Deg}}
\def\dim{\mathrm{dim}}
\def\Gal{\mathrm{Gal}}
\def\Ind{\mathrm{Ind}}
\def\temp{\mathrm{temp}}
\def\St{\mathrm{St}}
\def\SO{\mathrm{SO}}
\def\GL{\mathrm{GL}}
\def\Aut{\mathrm{Aut}}
\def\F{\mathbb{F}}
\def\C{\mathbb{C}}
\def\Z{\mathbb{Z}}
\def\T{\mathbb{T}}
\def\Q{\mathbb{Q}}
\def\SL{\mathrm{SL}}
\def\PSL{\mathrm{PSL}}
\def\im{\mathrm{im}}
\def\triv{\mathrm{triv}}
\def\Prim{\mathbf{Prim}}
\def\W{\mathbf{W}}
\def\q{/\!/}
\def\phi{\varphi}
\begin{document}

\title[Formal degrees]{$L$-packets and formal degrees for $\SL_2(K)$ with $K$ a local function field of characteristic $2$}

\author[S. Mendes]{Sergio Mendes}
\address{ISCTE - Lisbon University Institute, Av. das For\c{c}as Armadas, 1649-026, Lisbon, Portugal}
\email{sergio.mendes@iscte.pt}
\author[R. Plymen]{Roger Plymen}
\address{School of Mathematics, Southampton University, Southampton SO17 1BJ,  England \emph{and} School of Mathematics, Manchester University,
Manchester M13 9PL, England}
\email{r.j.plymen@soton.ac.uk \quad plymen@manchester.ac.uk}

%\date{\today}
\maketitle

\begin{abstract}  Let $\cG = \SL_2(K)$ with $K$ a local function field of characteristic $2$.   We review  Artin-Schreier theory for the
field $K$,  and show that this leads to a parametrization of $L$-packets in the smooth dual of $\cG$.  We relate this to a recent geometric conjecture.
The $L$-packets in the principal series are parametrized by quadratic extensions, and the supercuspidal $L$-packets by biquadratic extensions.
We compute the formal degrees of the elements in the supercuspidal packets.

\end{abstract}

\tableofcontents

\section{Introduction}

In this article we consider  a local function field $K$ of characteristic $2$, namely
$K= \F_q((\varpi))$,  the field of Laurent series with
coefficients in $\F_q$, with $q=2^f$. This example is particularly
interesting because there are countably many quadratic extensions of
$\F_q((\varpi))$.

 We consider $\cG = \SL_2(K)$.
 %   over the local function field $\F_q((\varpi))$.
  Drawing on the accounts in \cite{Da, Th1, Th2}, we review  Artin-Schreier theory, adapted to the local function
field $ \F_q((\varpi))$.   This leads to a parametrization of $L$-packets in the smooth dual of $\cG$.   In this article, we reserve the term $L$-packets for the ones which are not singletons.

The $L$-packets in the principal series are parametrized by quadratic extensions, and the supercuspidal $L$-packets by biquadratic extensions. There are countably many supercuspidal packets.

By \emph{canonical formal degree} we shall mean formal degree with respect to the Euler-Poincar\'e measure on $\cG$, as in \cite{Reed}.   We compute the canonical formal degrees of the elements in the supercuspidal packets,  relying on the Formal Degree component of the local Langlands correspondence, see  \cite[\S6]{Reed}.   The canonical formal degrees are all dyadic rationals, in fact they are integer powers of $2$.   They depend on the residue degree $f$, and on the breaks in the lower ramification filtration of the Galois group $\Z/2\Z \times \Z/2\Z$.

The commutative triangle in Theorem 4.3, and the bijective maps \ref{T1}, \ref{T2}, \ref{T3} in \S5, amount to a proof, for $\cG$,  of the \emph{tempered} version of the geometric conjecture in \cite{ABPS}.

Thanks to Anne-Marie Aubert for her careful reading of the file, and for sending us some valuable comments.
The first author would like to thank Chandan Dalawat for a valuable exchange of emails and for the reference \cite{Da}.

\section{Artin-Schreier theory}

Let $K$ be a local field with positive characteristic $p$. The cyclic
extensions of $K$ whose degree $n$ is coprime with $p$ are described
by Kummer theory. It is well known that any cyclic
extension $L/K$ of degree $n$, $(n,p)=1$, is generated by a root
$\alpha$ of an irreducible polynomial $x^n-a\in K[x]$.
If $\alpha\in K^s$ is a root of $x^n-a$ then $K(\alpha)/K$ is a
cyclic extension of degree $n$ and is called a Kummer extension of
$K$.

Artin-Schreier theory aims to describe cyclic extensions of degree equal to
or divisible by $ch(K)=p$. It is therefore an analogue of Kummer theory, where the
role of the polynomial $x^n-a$ is played by $x^n-x-a$. Essentially,
every cyclic extension of $K$ with degree $p=ch(K)$ is generated by
a root $\alpha$ of $x^p-x-a\in K[x]$.

We fix an algebraic closure $\overline{K}$ of $K$ and a separable closure
$K^s$ of $K$ in $\overline{K}$. Let $\wp$ denote the Artin-Schreier endomorphism of the additive group
$K^s$ \cite{Ne}:
\[
\wp : K^s \to K^s, \quad x \mapsto x^p-x.
\]

Given $a\in K$ denote by $K(\wp^{-1}(a))$ the extension $K(\alpha)$,
where $\wp(\alpha)=a$ and $\alpha\in K^s$. We have the following characterization of finite cyclic Artin-Schreier extensions of degree $p$:

\begin{thm}
\begin{itemize}
\item[$(i)$] Given $a\in K$, either $\wp(x)-a\in K[x]$ has one root in $K$ in which case it has all the $p$ roots are in $K$, or is irreducible.
\item[$(ii)$] If $\wp(x)-a\in K[x]$ is irreducible then $K(\wp^{-1}(a))/K$ is a cyclic extension of degree $p$, with $\wp^{-1}(a)\subset K^s$.
\item[$(iii)$] If $L/K$ be a finite cyclic extension of degree $p$, then $L=K(\wp^{-1}(a))$, for some $a\in K$.
\end{itemize}
\end{thm}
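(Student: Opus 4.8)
The plan is to establish the three parts by exploiting the structure of the map $\wp$ as a homomorphism of additive groups with kernel $\F_p$, together with Galois-theoretic arguments. First, for part $(i)$, I would observe that if $\alpha \in K^s$ satisfies $\wp(\alpha) = a$, then the full set of roots of $\wp(x) - a$ is $\{\alpha + c : c \in \F_p\}$, since $\wp(\alpha + c) = \wp(\alpha) + \wp(c) = a + 0 = a$ for $c \in \F_p \subset K$; this already gives all $p$ roots once one root $\alpha$ is known, and shows they all lie in $K(\alpha)$. Hence if one root lies in $K$, all $p$ roots do. For the dichotomy, I would argue that if $\wp(x) - a$ has no root in $K$, it must be irreducible: the Galois group of the splitting field acts on the $p$ roots, and any automorphism $\sigma$ sends $\alpha \mapsto \alpha + c_\sigma$ for some $c_\sigma \in \F_p$, with $\sigma \mapsto c_\sigma$ an injective homomorphism into $\F_p$; since $\F_p$ has no proper nontrivial subgroups, the Galois group is either trivial (all roots in $K$) or all of $\Z/p\Z$, so the minimal polynomial of $\alpha$ has degree $1$ or $p$.

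For part $(ii)$, assuming $\wp(x) - a$ is irreducible, $L = K(\alpha) = K(\wp^{-1}(a))$ has degree $p$ over $K$ since $\alpha$ is a root of an irreducible degree-$p$ polynomial. By the root description above, $L$ contains all $p$ roots $\alpha + c$, so $L/K$ is normal, hence Galois; and the injection $\Gal(L/K) \hookrightarrow \F_p$ above is now an isomorphism by order count, so $L/K$ is cyclic of degree $p$. This also records $\wp^{-1}(a) \subset K^s$, which is immediate from $\alpha \in K^s$ and closure of $K^s$ under addition by elements of $\F_p \subset K \subset K^s$.

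For part $(iii)$, given a cyclic extension $L/K$ of degree $p$ with generator $\sigma$ of $\Gal(L/K)$, the goal is to produce $\alpha \in L$ with $\sigma(\alpha) = \alpha + 1$ and $a := \wp(\alpha) \in K$. The key tool is the additive form of Hilbert 90: since the trace-like additive cohomology $H^1$ vanishes, equivalently since the element $1 \in K$ has trace $\mathrm{Tr}_{L/K}(1) = p \cdot 1 = 0$ in characteristic $p$, there exists $\theta \in L$ with $\sigma(\theta) - \theta = 1$ — this is the standard normal-basis or additive-Hilbert-90 argument (find $\beta$ with nonzero trace, set $\theta$ to be an appropriate combination $\sum_{i} i\,\sigma^i(\beta)/\mathrm{Tr}(\beta)$). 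Setting $\alpha = \theta$, one checks $\wp(\alpha) = \alpha^p - \alpha$ is $\sigma$-fixed: $\sigma(\alpha^p - \alpha) = (\alpha+1)^p - (\alpha+1) = \alpha^p + 1 - \alpha - 1 = \alpha^p - \alpha$, so $a := \wp(\alpha) \in K$, and $L = K(\alpha) = K(\wp^{-1}(a))$ because $\alpha \notin K$ (as $\sigma(\alpha) \neq \alpha$) forces $[K(\alpha):K] = p = [L:K]$.

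The main obstacle is the additive Hilbert 90 step in part $(iii)$: producing the element $\theta$ with $\sigma(\theta) - \theta = 1$ requires either invoking the vanishing of additive Galois cohomology or giving the explicit averaging construction, and one must be slightly careful that the element used to build $\theta$ has nonzero trace (guaranteed by separability via the normal basis theorem or by nondegeneracy of the trace form). Everything else — the root-shift description, the injection into $\F_p$, and the degree counts — is routine once this is in hand. I would present part $(iii)$'s cohomological input as a cited lemma rather than reproving it in full.
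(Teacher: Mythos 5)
The paper does not actually prove this theorem: immediately after the statement it refers the reader to Thomas \cite{Th1} (p.~34), since this is the classical Artin--Schreier theorem, valid over any field of characteristic $p$ --- the local hypothesis plays no role here. Your argument is the standard textbook proof and it is correct: the root set is $\{\alpha + c : c \in \F_p\}$ (using $c^p = c$), the polynomial is separable because its formal derivative is $-1$, the map $\sigma \mapsto \sigma(\alpha)-\alpha$ is an injective homomorphism $\Gal(K(\alpha)/K) \hookrightarrow \Z/p\Z$, and the simplicity of $\Z/p\Z$ yields the dichotomy in $(i)$ and the cyclicity in $(ii)$; for $(iii)$, additive Hilbert~90 (or the explicit averaging you sketch, justified by the nondegeneracy of the trace form in a separable extension) produces $\alpha$ with $\sigma(\alpha) = \alpha \pm 1$, whence $\wp(\alpha) \in K$ and $K(\alpha) = L$ by degree count. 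One minor sign point: with $\theta = \sum_{i=0}^{p-1} i\,\sigma^i(\beta)$ and $\mathrm{Tr}_{L/K}(\beta) = 1$, the telescoping gives $\sigma(\theta)-\theta = -\mathrm{Tr}_{L/K}(\beta) = -1$ rather than $+1$, so one should set $\alpha = -\theta$; this is harmless, since $\wp(\alpha)$ is $\sigma$-invariant whether $\sigma(\alpha) = \alpha+1$ or $\sigma(\alpha) = \alpha - 1$.
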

(See \cite[p.34]{Th1} for more details.)

\bigskip

We fix now some notation. $K$ is a local field with characteristic $p>1$ with finite
residue field $k$. The field of constants $k=\F_q$ is a finite extension of $\F_p$, with degree $[k:\F_p]=f$ and $q=p^f$.

Let $\mathfrak{o}$ be the ring of integers in $K$ and denote by $\mathfrak{p}\subset\mathfrak{o}$ the (unique) maximal ideal of $\mathfrak{o}$.
This ideal is principal and any generator of $\mathfrak{p}$ is called a uniformizer. A choice of uniformizer $\varpi\in\mathfrak{o}$ determines isomorphisms $K\cong\F_q((\varpi))$, $\mathfrak{o}\cong\F_q[[\varpi]]$ and $\mathfrak{p}=\varpi\mathfrak{o}\cong\varpi\F_q[[\varpi]]$.

A normalized valuation on $K$ will be denoted by $\nu$, so that $\nu(\varpi)=1$ and $\nu(K)=\Z$. The group of units is denoted by $\mathfrak{o}^{\times}$.

\subsection{The Artin-Schreier symbol}

%An extension $L$ of $K$ is called $p$-closed if it has no Galois extensions of degree $p$ \cite[p.290]{NSW}. Examples of
%$p$-closed extensions are the separable closure $K^s/K$ and $K_p/K$. The endomorphism $\wp\in End(K)$ extends to
%any extension of $K$. In particular, it extends to any $p$-closed extension.

Let $L/K$ be a finite Galois extension. Let $N_{L/K}$ be the norm map and denote $G_{L/K}^{ab}=Gal(L/K)^{ab}$ the abelianization of $Gal(L/K)$. The reciprocity map is a group isomorphism
\begin{equation}
K^{\times}/N_{L/K} L^{\times}\stackrel{\simeq}\longrightarrow G_{L/K}^{ab}.
\end{equation}\label{Artin symbol}
The Artin symbol is obtained by composing the reciprocity map with the canonical morphism $K^{\times}\rightarrow K^{\times}/N_{L/K} L^{\times}$
\begin{equation}
b\in K^{\times} \mapsto(b,L/K)\in G_{L/K}^{ab}.
\end{equation}

From the Artin symbol we obtain a pairing
\begin{equation}\label{A-S pairing KxK*}
K\times K^{\times}\longrightarrow\mathbb{Z}/p\mathbb{Z} , (a,b)\mapsto(b,L/K)(\alpha)-\alpha,
\end{equation}
where $\wp(\alpha)=a$, $\alpha\in K^s$ and $L=K(\alpha)$.

\begin{defn}
Given $a\in K$ and $b\in K^{\times}$, the Artin-Schreier symbol is defined to be
\[
[a,b)=(b,L/K)(\alpha)-\alpha.
\]
\end{defn}

We summarize some important properties of the Artin-Schreier symbol.

\begin{prop}\label{Artin-Schreier symbol}
The Artin-Schreier symbol is a bilinear map satisfying the following properties:
\begin{itemize}
\item[$(i)$] $[a_1+a_2,b)=[a_1,b)+[a_2,b)$;
\item[$(ii)$] $[a,b_1b_2)=[a,b_1)+[a,b_2)$;
\item[$(iii)$] $[a,b)=0, \forall a\in K \Leftrightarrow b\in N_{L/K}L^{\times}, L=K(\alpha) \textrm{ and }\wp(\alpha)=a$;
\item[$(iv)$] $[a,b)=0, \forall b\in K^{\times} \Leftrightarrow a\in\wp(K)$.
\end{itemize}
\end{prop}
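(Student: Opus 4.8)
The plan is to verify first that $[a,b)$ is well defined and lands in $\Z/p\Z$, then establish the two additivity statements $(i)$ and $(ii)$ (which together are exactly the bilinearity assertion), and finally read off the non-degeneracy statements $(iii)$ and $(iv)$ from the reciprocity isomorphism. For well-definedness: if $\wp(\alpha)=\wp(\alpha')=a$ then in characteristic $p$ one has $(\alpha-\alpha')^p=\alpha-\alpha'$, so $\alpha-\alpha'\in\F_p\subseteq K$; hence $K(\alpha)=K(\alpha')=:L$, and since $(b,L/K)$ fixes $K$ the value $(b,L/K)(\alpha)-\alpha$ is unchanged. For any $\sigma\in\Gal(L/K)$ the element $\sigma(\alpha)$ is again a root of $\wp(x)=a$, so $\sigma(\alpha)-\alpha\in\F_p=\Z/p\Z$; moreover $\sigma\mapsto\sigma(\alpha)-\alpha$ is a group homomorphism $\Gal(L/K)\to\Z/p\Z$ (because $\sigma$ fixes $\F_p$), and it is injective, its kernel fixing $\alpha$ hence $L$. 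When $a\in\wp(K)$ we have $L=K$ and $[a,b)=0$; otherwise $\wp(x)-a$ is irreducible of degree $p$ by the first theorem of this section and this homomorphism is an isomorphism $\Gal(L/K)\xrightarrow{\ \sim\ }\Z/p\Z$.

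Property $(ii)$ is then immediate: for fixed $a$ and $L=K(\wp^{-1}(a))$ the Artin symbol $b\mapsto(b,L/K)$ is a homomorphism $K^\times\to\Gal(L/K)$, and $b\mapsto[a,b)$ is obtained by composing it with the homomorphism $\sigma\mapsto\sigma(\alpha)-\alpha$, whence $[a,b_1b_2)=[a,b_1)+[a,b_2)$. For property $(i)$, write $\wp(\alpha_i)=a_i$; since $\wp$ is additive, $\wp(\alpha_1+\alpha_2)=a_1+a_2$. Put $L_i=K(\alpha_i)$, $L_{12}=K(\alpha_1+\alpha_2)$ and $M=K(\alpha_1,\alpha_2)$, so $M/K$ is abelian with Galois group a subgroup of $\Z/p\Z\times\Z/p\Z$, and $L_1,L_2,L_{12}$ are subextensions. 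The key input is the consistency of the Artin reciprocity map: for $K\subseteq L'\subseteq M$ the restriction $\Gal(M/K)\to\Gal(L'/K)$ sends $(b,M/K)$ to $(b,L'/K)$. Applying this with $L'=L_1,L_2,L_{12}$ and using that $(b,M/K)$ is a field automorphism,
\[
\begin{aligned}
[a_1+a_2,b) &= (b,M/K)(\alpha_1+\alpha_2)-(\alpha_1+\alpha_2)\\
&= \big((b,M/K)(\alpha_1)-\alpha_1\big)+\big((b,M/K)(\alpha_2)-\alpha_2\big)=[a_1,b)+[a_2,b).
\end{aligned}
\]

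For $(iv)$: if $a\in\wp(K)$ then $L=K$ and $[a,b)=0$ for all $b\in K^\times$; conversely, if $a\notin\wp(K)$ then $L/K$ is cyclic of degree $p$ and the Artin symbol $K^\times\to\Gal(L/K)$ is surjective, so choosing $b$ with $(b,L/K)\ne 1$ gives an automorphism not fixing $\alpha$ (as $K(\alpha)=L$), i.e. $[a,b)\ne 0$. For $(iii)$: when $a\notin\wp(K)$ the map $b\mapsto[a,b)$ is the composite
\[
K^\times \twoheadrightarrow K^\times/N_{L/K}L^\times \xrightarrow{\ \sim\ } \Gal(L/K) \xrightarrow{\ \sim\ } \Z/p\Z ,
\]
the middle arrow being the reciprocity isomorphism, so its kernel is exactly $N_{L/K}L^\times$; the case $a\in\wp(K)$ is trivial since then $N_{L/K}L^\times=K^\times$. (If the quantifier in $(iii)$ is read literally over all $a$, the condition on $b$ becomes membership in the intersection of the norm groups $N_{K(\wp^{-1}(a))/K}(\cdot)^\times$, which follows by intersecting the preceding equivalence over $a$.)

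The only nontrivial ingredient is the functoriality of the Artin symbol under restriction to subextensions of an abelian extension, used in $(i)$; the rest is bookkeeping with the additivity of $\wp$ and the reciprocity isomorphism. The one point requiring care in $(i)$ is that when one of $a_1$, $a_2$, or $a_1+a_2$ lies in $\wp(K)$ the associated field degenerates to $K$, but since $M/K$ is abelian in all cases the consistency property still applies verbatim, so the displayed computation is unaffected.
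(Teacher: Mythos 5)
The paper offers no proof of this proposition; it simply cites Neukirch \cite{Ne}, p.~341. Your argument correctly supplies the missing proof and follows the standard route. The well-definedness of $[a,b)$ and the identification of $\sigma\mapsto\sigma(\alpha)-\alpha$ as an injective homomorphism $\Gal(L/K)\to\Z/p\Z$ are exactly the right preliminaries; $(ii)$ then follows from the multiplicativity of the Artin symbol $b\mapsto(b,L/K)$, while $(iv)$ and $(iii)$ follow from the reciprocity isomorphism $K^\times/N_{L/K}L^\times\simeq\Gal(L/K)$ together with the surjectivity of the Artin symbol. The only nontrivial input, as you note, is the consistency of the local Artin map under restriction from an abelian extension $M/K$ to a subextension $L'$, which is what makes the additivity in $(i)$ go through after passing to the compositum $M=K(\alpha_1,\alpha_2)$ and using that $\wp$ is additive. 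You are also right to flag the awkward quantification in $(iii)$: the free occurrence of $a$ on the right-hand side forces the reading ``for every $a$, $[a,b)=0\Leftrightarrow b\in N_{L/K}L^\times$ where $L=K(\wp^{-1}(a))$,'' and your treatment covers this (and the degenerate case $a\in\wp(K)$, where $L=K$ and $N_{L/K}L^\times=K^\times$) correctly.
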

(See \cite[p.341]{Ne})

\subsection{The groups $K/\wp(K)$ and $K^{\times}/K^{\times p}$}

In this section we recall some properties of the groups $K/\wp(K)$ and $K^{\times}/K^{\times p}$ and use them to redefine the pairing (\ref{A-S pairing KxK*}).

Consider the additive group $K$. The index of $\wp(K)$ in $K$ is infinite \cite[p.146]{FV}. Hence, $K/\wp(K)$ is infinite.
\begin{prop}\label{group K/PK}
$K/\wp(K)$ is a discrete abelian torsion group.
\end{prop}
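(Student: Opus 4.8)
The plan is to establish the three assertions — abelian, torsion, discrete — separately, since each is of a different nature. That $K/\wp(K)$ is abelian is immediate: $K$ is an abelian group under addition, $\wp$ is an additive endomorphism (Theorem 2.1 and the surrounding discussion already record that $\wp$ is a homomorphism of the additive group $K^s$), so $\wp(K)$ is a subgroup and the quotient is abelian. I would dispose of this in one line.

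For the torsion claim, the key point is that $K$ has characteristic $p$, so for every $a \in K$ the element $pa = 0$ in $K$; hence $p\cdot(a + \wp(K)) = pa + \wp(K) = \wp(K)$, i.e. every element of $K/\wp(K)$ is killed by $p$. Thus $K/\wp(K)$ is in fact an $\F_p$-vector space, a fortiori a torsion abelian group of exponent $p$. (This also re-proves commutativity, but I would keep the two remarks logically separate for clarity.)

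The substantive part is the topology: I must explain in what sense $K/\wp(K)$ is \emph{discrete}. The natural topology here is the one inherited from the locally compact topology on $K = \F_q((\varpi))$, whose basis of neighbourhoods of $0$ is given by the fractional ideals $\mathfrak{p}^n = \varpi^n\mathfrak{o}$. I would show that $\wp(K)$ contains an open neighbourhood of $0$ in $K$; since $\wp(K)$ is a subgroup, this forces $\wp(K)$ to be open, and an open subgroup always has discrete (indeed Hausdorff) quotient. Concretely, I claim $\mathfrak{o} \subseteq \wp(K)$, or at least $\mathfrak{p} \subseteq \wp(K)$: given $a \in \mathfrak{p}$ (so $\nu(a) \ge 1$), one solves $\alpha^p - \alpha = a$ by the formula $\alpha = -\sum_{j\ge 0} a^{p^j}$, which converges $\varpi$-adically because $\nu(a^{p^j}) = p^j \nu(a) \to \infty$, and one checks $\alpha^p - \alpha = -\sum_{j\ge 1} a^{p^j} + \sum_{j\ge 0}a^{p^j} = a$; moreover $\alpha \in \mathfrak{p} \subset \mathfrak{o} \subset K$. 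Hence $\mathfrak{p} \subseteq \wp(K)$, so $\wp(K)$ is open, and $K/\wp(K)$ is discrete.

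The main obstacle I anticipate is purely expository rather than mathematical: being precise about which topology on $K/\wp(K)$ is meant, and making sure the convergence argument for the series $\alpha = -\sum_{j\ge 0} a^{p^j}$ is stated with the right hypothesis (one needs $\nu(a) \ge 1$; for $\nu(a)=0$ the series need not converge and indeed $\mathfrak{o}^\times$ maps to the nontrivial part of $K/\wp(K)$ coming from the residue field via the Artin–Schreier extensions of $\F_q$). Everything else is routine once the openness of $\wp(K)$ is in hand. Combining the three parts: $K/\wp(K)$ is an abelian group (trivially), of exponent $p$ hence torsion, and discrete because $\wp(K)\supseteq \mathfrak{p}$ is an open subgroup. $\qquad\blacksquare$
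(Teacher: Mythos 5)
Your proof is correct and follows essentially the same route as the paper: show $\mathfrak{p} \subseteq \wp(K)$, conclude $\wp(K)$ is open and hence the quotient discrete, and observe that characteristic $p$ forces exponent $p$. The only difference is that where the paper cites Dalawat's Lemma~8 for the fact that $\wp\colon\mathfrak{p}\to\mathfrak{p}$ is an isomorphism, you supply the proof directly via the convergent series $\alpha=-\sum_{j\ge 0}a^{p^j}$ — which is precisely the standard argument behind that lemma, so this is an unpacking rather than a different method. Incidentally, your phrasing of the torsion step (characteristic $p$ makes $K$, hence $K/\wp(K)$, of exponent $p$) is cleaner than the paper's, which says ``$\wp(K)$ is annihilated by $p$'' where it evidently means the quotient is.
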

\begin{proof}
The ring of integers decomposes as a (direct) sum
\[
\mathfrak{o}=\mathbb{F}_q+\mathfrak{p}
\]
and we have
\[
\wp(\mathfrak{o})=\wp(\mathbb{F}_q)+\wp(\mathfrak{p}).
\]
The restriction $\wp:\mathfrak{p}\rightarrow\mathfrak{p}$ is an isomorphism, see \cite[Lemma 8]{Da}. Hence,
\[
\wp(\mathfrak{o})=\wp(\mathbb{F}_q)+\mathfrak{p}
\]
and $\mathfrak{p}\subset\wp(K)$. It follows that $\wp(K)$ is an open subgroup of $K$ and $K/\wp(K)$ is discrete. Since $\wp(K)$ is annihilated by $p$, $K/\wp(K)$ is a torsion group.
\end{proof}

Now we concentrate on the multiplicative group $K^{\times}$. The subgroup $K^{\times p}$ is not open in $K^{\times}$ and the index $[K^{\times}:K^{\times p}]$ is infinite \cite[Lemma p.115]{FV}. Hence, $K^{\times}/K^{\times p}$ is infinite. The next result gives a characterization of the topological group $K^{\times}/K^{\times p}$.

\begin{prop}\label{K*/K^*p is profinite}
$K^{\times}/K^{\times p}$ is a profinite abelian $p$-torsion group.
\end{prop}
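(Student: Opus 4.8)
The plan is to strip off the non-compact part of $K^{\times}$ coming from the valuation and reduce everything to a soft statement about the compact unit group $\mathfrak{o}^{\times}$. Two of the three assertions are immediate: the quotient $K^{\times}/K^{\times p}$ is abelian because $K^{\times}$ is, and it is $p$-torsion because for any $b\in K^{\times}$ one has $(b\,K^{\times p})^{p}=b^{p}K^{\times p}=K^{\times p}$, as $b^{p}\in K^{\times p}$ by definition. So the only real content is that $K^{\times}/K^{\times p}$ is profinite, i.e.\ compact, Hausdorff and totally disconnected.

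First I would fix a uniformizer $\varpi$, giving a topological isomorphism $K^{\times}\cong\Z\times\mathfrak{o}^{\times}$ with $\Z$ discrete (sending $\varpi^{n}u\mapsto(n,u)$). Since raising to the $p$-th power is a group endomorphism and this is a direct product, $K^{\times p}$ corresponds to $p\Z\times(\mathfrak{o}^{\times})^{p}$, so
\[
K^{\times}/K^{\times p}\;\cong\;\Z/p\Z\;\times\;\mathfrak{o}^{\times}/(\mathfrak{o}^{\times})^{p}.
\]
As $\Z/p\Z$ is finite and a finite product of profinite groups is profinite, it suffices to show $\mathfrak{o}^{\times}/(\mathfrak{o}^{\times})^{p}$ is profinite. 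Here $\mathfrak{o}^{\times}$ itself is profinite, being a closed subgroup of the compact ring $\mathfrak{o}$ (equivalently $\varprojlim_{n}(\mathfrak{o}/\mathfrak{p}^{n})^{\times}$), with the subgroups $U^{(n)}=1+\mathfrak{p}^{n}$ forming a neighbourhood basis of the identity by open normal subgroups.

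The one point that needs an argument is that $(\mathfrak{o}^{\times})^{p}$ is \emph{closed} in $\mathfrak{o}^{\times}$. The $p$-th power map $x\mapsto x^{p}$ is continuous (it is built from multiplication; in characteristic $p$ it is the Frobenius endomorphism of $K$ restricted to units), so its image $(\mathfrak{o}^{\times})^{p}$ is the continuous image of a compact group, hence compact, hence closed in the Hausdorff group $\mathfrak{o}^{\times}$. Granting this, $\mathfrak{o}^{\times}/(\mathfrak{o}^{\times})^{p}$ is the quotient of a profinite group by a closed normal subgroup, and the images of the $U^{(n)}$ give a neighbourhood basis of the identity by open subgroups with trivial intersection, so the quotient is compact, Hausdorff and totally disconnected, i.e.\ profinite. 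I expect no serious obstacle: the proof is just this reduction plus the standard fact that a continuous image of a compact space is closed in a Hausdorff target. The only routine checks are that the uniformizer decomposition is a homeomorphism carrying $K^{\times p}$ onto $p\Z\times(\mathfrak{o}^{\times})^{p}$, and that the quotient topology on $\mathfrak{o}^{\times}/(\mathfrak{o}^{\times})^{p}$ is the profinite one — both clear once $(\mathfrak{o}^{\times})^{p}$ is known to be closed. (In characteristic $p$ one can even compute $(\mathfrak{o}^{\times})^{p}=\F_{q}^{\times}\cdot(1+\varpi^{p}\F_{q}[[\varpi^{p}]])$, which exhibits the quotient as an infinite product of copies of $\F_{q}$, but this is not needed for the statement.)
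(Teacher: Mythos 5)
Your argument is correct, but it takes a genuinely different and softer route than the paper's. The paper invokes Iwasawa's structure theorem $\mathfrak{o}^{\times}\cong\prod_{\mathbb{N}}\mathbb{Z}_p$, writes $K^{\times p}\cong p\mathbb{Z}\times\prod_{\mathbb{N}}p\mathbb{Z}_p$, constructs an explicit surjection $\mathbb{Z}\times\prod_{\mathbb{N}}\mathbb{Z}_p\to\mathbb{Z}/p\mathbb{Z}\times\prod_{\mathbb{N}}\mathbb{Z}/p\mathbb{Z}$ with that kernel, and concludes that $K^{\times}/K^{\times p}$ is (topologically isomorphic to) a countable product of copies of $\mathbb{Z}/p\mathbb{Z}$, hence profinite by Tychonoff. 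You instead strip off the $\mathbb{Z}/p\mathbb{Z}$ factor from the valuation, observe that the $p$-th power map $\mathfrak{o}^{\times}\to\mathfrak{o}^{\times}$ is continuous so that $(\mathfrak{o}^{\times})^{p}$ is compact hence closed, and then cite the general fact that a quotient of a profinite group by a closed normal subgroup is again profinite. Your reduction to closedness of $(\mathfrak{o}^{\times})^{p}$ is the right key point and the argument is complete. The trade-off: your route is more elementary and more portable (it works without knowing the explicit structure of $\mathfrak{o}^{\times}$), whereas the paper's computation has the added benefit of identifying $K^{\times}/K^{\times p}$ explicitly as $\prod_{n\geq 0}\mathbb{Z}/p\mathbb{Z}$, an identification used implicitly later in \S3. (Incidentally, the isomorphism displayed at the end of the paper's proof, $K^{\times}/K^{\times p}\cong\prod_{\mathbb{N}}p\mathbb{Z}_p$, is a typographical slip for the image $\prod_{n\geq 0}\mathbb{Z}/p\mathbb{Z}$ of the constructed map $\varphi$, not its kernel.)
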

\begin{proof}
There is a canonical isomorphism $K^{\times}\cong\mathbb{Z}\times\mathfrak{o}^{\times}$. By \cite[p.25]{Iw}, the group of units $\mathfrak{o}^{\times}$ is a direct product of countable many copies of the ring of $p$-adic integers
$$\mathfrak{o}^{\times}\cong\mathbb{Z}_p\times\mathbb{Z}_p\times\mathbb{Z}_p\times...=\prod_{\mathbb{N}}\mathbb{Z}_p.$$
Give $\mathbb{Z}$ the discrete topology and $\mathbb{Z}_p$ the $p$-adic topology. Then, for the product topology, $K^{\times}=\mathbb{Z}\times\prod_{\mathbb{N}}\mathbb{Z}_p$ is a topological group, locally compact, Hausdorff and totally disconnected.

$K^{\times p}$ decomposes as a product of countable many components
\[
K^{\times p}\cong p\mathbb{Z}\times p\mathbb{Z}_p\times p\mathbb{Z}_p\times...=p\mathbb{Z}\times\prod_{\mathbb{N}}p\mathbb{Z}_p.
\]

Denote by $y=\prod_n y_n$ and element of $\prod_{\mathbb{N}}\mathbb{Z}_p$, where $y_n=\sum_{i=0}^{\infty}a_{i,n}p^i \in\mathbb{Z}_p$, for every $n$.

The map
\[
\varphi:\mathbb{Z}\times\prod_{\mathbb{N}}\mathbb{Z}_p\rightarrow\mathbb{Z}/p\mathbb{Z}\times\prod_{\mathbb{N}}\mathbb{Z}/p\mathbb{Z}, (x,y)\mapsto (x(mod p),\prod_n pr_0(y_n))
\]
where $pr_0(y_n)=a_{0,n}$ is the projection, is clearly a group homomorphism.

Now, $\mathbb{Z}/p\mathbb{Z}\times\prod_{\mathbb{N}}\mathbb{Z}/p\mathbb{Z}=\prod_{n=0}^{\infty}\mathbb{Z}/p\mathbb{Z}$ is a topological group for the product topology, where each component $\mathbb{Z}/p\mathbb{Z}$ has the discrete topology. Moreover, it is compact by Tyconoff Theorem, Hausdorff and totally disconnected \cite[TGI.84, Prop. 10]{Bo}. Therefore, $\prod_{n=0}^{\infty}\mathbb{Z}/p\mathbb{Z}$ is a profinite group.

Since
\[
ker\varphi=p\mathbb{Z}\times\prod_{\mathbb{N}}p\mathbb{Z}_p,
\]
it follows that there is an isomorphism of topological groups
\[
K^{\times}/K^{\times p}\cong\prod_{\mathbb{N}}p\mathbb{Z}_p,
\]
where $K^{\times}/K^{\times p}$ is given the quotient topology. Therefore, $K^{\times}/K^{\times p}$ is profinite.
\end{proof}

From Propositions \ref{Filtration K/P(K)} and \ref{K*/K^*p is profinite}, $K/\wp(K)$ is a discrete abelian group and $K/K^{\times p}$ is an abelian profinite group, both annihilated by $p=ch(K)$. Therefore, Pontryagin duality coincides with $Hom(-,\mathbb{Z}/p\mathbb{Z})$ on both of these groups, see \cite{Th2}. See also \cite{RZ} for more details on Pontryagin duality. The pairing (\ref{A-S pairing KxK*}) restricts to a pairing

\begin{equation}\label{A.-S- Pairing}
[.,.): K/\wp(K)\times K^{\times}/K^{\times p}\rightarrow\Z/p\Z.
\end{equation}
which we refer from now on to the \textbf{Artin-Schreier pairing}. It follows from $(iii)$ and $(iv)$ of Proposition \ref{Artin-Schreier symbol}, the pairing is nondegenerate (see also \cite[Proposition 3.1]{Th2}). The next result shows that the pairing is perfect.

\begin{prop}\label{A-S symbol induce quadratic character}
The Artin-Schreier symbol induces isomorphisms of topological groups
\[
K^{\times}/K^{\times p}\stackrel{\simeq}\longrightarrow Hom(K/\wp(K),\mathbb{Z}/p\mathbb{Z}), bK^{\times p}\mapsto(a+\wp(K)\mapsto[a,b))
\]
and
\[
K/\wp(K)\stackrel{\simeq}\longrightarrow Hom(K^{\times}/K^{\times p},\mathbb{Z}/p\mathbb{Z}), a+\wp(K)\mapsto(bK^{\times p}\mapsto[a,b))
\]
\end{prop}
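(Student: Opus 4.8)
The plan is to derive the \emph{perfectness} of the Artin--Schreier pairing from its nondegeneracy, already recorded above, together with Pontryagin biduality for the two complementary types of topological group involved: $K/\wp(K)$ is discrete and annihilated by $p$ (Proposition \ref{group K/PK}), while $K^{\times}/K^{\times p}$ is profinite and annihilated by $p$ (Proposition \ref{K*/K^*p is profinite}). Write $\phi_1$ and $\phi_2$ for the two maps in the statement.

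First I would check that $\phi_1$ and $\phi_2$ are well-defined continuous homomorphisms. That each is additive in the relevant variable is immediate from the bilinearity in Proposition \ref{Artin-Schreier symbol}, and $\phi_1$ kills $K^{\times p}$ (resp.\ $\phi_2$ kills $\wp(K)$) because $[a,b^p)=p\,[a,b)=0$ in $\Z/p\Z$ (resp.\ by part $(iv)$). For continuity of $\phi_1$: since $K/\wp(K)$ is discrete, the compact--open topology on $Hom(K/\wp(K),\Z/p\Z)$ is that of pointwise convergence, so it suffices that for each fixed $a\in K$ the map $b\mapsto[a,b)$ be continuous on $K^{\times}$; but this map factors through the Artin symbol $K^{\times}\to K^{\times}/N_{L/K}L^{\times}\to\Z/p\Z$ with $L=K(\wp^{-1}(a))$ a finite extension of $K$, hence $N_{L/K}L^{\times}$ is open of finite index and the map is locally constant. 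Thus $\phi_1$ is a continuous homomorphism of profinite groups; $\phi_2$ is automatically continuous, its source being discrete. Finally, both maps are injective: this is precisely the nondegeneracy of the pairing noted above, following from parts $(iii)$ and $(iv)$ of Proposition \ref{Artin-Schreier symbol}.

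It remains to prove surjectivity, and here I would use Pontryagin biduality. The image $\phi_1(K^{\times}/K^{\times p})$ is a compact, hence closed, subgroup of the profinite group $Hom(K/\wp(K),\Z/p\Z)$. If it were proper, then --- a closed subgroup of a profinite abelian group being the intersection of the kernels of the continuous characters containing it, and every such character of $Hom(K/\wp(K),\Z/p\Z)$ being, by biduality, evaluation at some element of $K/\wp(K)$ --- there would exist a class $a+\wp(K)\neq 0$ with $[a,b)=0$ for all $b\in K^{\times}$, contradicting nondegeneracy. Hence $\phi_1$ is a continuous bijective homomorphism between compact Hausdorff groups, and therefore a topological isomorphism. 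The second isomorphism then follows formally: under the canonical identification $K/\wp(K)\cong Hom(Hom(K/\wp(K),\Z/p\Z),\Z/p\Z)$, the map $\phi_2$ is the Pontryagin dual of $\phi_1$, and so it too is a topological isomorphism. (Alternatively one argues symmetrically: a proper image of $\phi_2$ in the discrete torsion group $Hom(K^{\times}/K^{\times p},\Z/p\Z)$ would, via a nonzero character into $\Z/p\Z$ realized by biduality as evaluation at some $b\in K^{\times}$, yield $bK^{\times p}\neq 0$ with $[a,b)=0$ for all $a$, again contradicting nondegeneracy; and a bijective homomorphism of discrete groups is a topological isomorphism.)

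The point requiring care is the bookkeeping of topologies in the duality step: one must know that the compact--open topology makes $Hom(K/\wp(K),\Z/p\Z)$ profinite and identifies it with the genuine Pontryagin dual --- this uses that $K/\wp(K)$ is discrete and $p$-torsion, so that $Hom(-,\Z/p\Z)$ coincides with $Hom(-,\R/\Z)$, as already observed before the statement --- and dually that $Hom(K^{\times}/K^{\times p},\Z/p\Z)$ is discrete and torsion. Granting this, the Pontryagin biduality theorem turns nondegeneracy into perfectness essentially formally, and no further computation with the Artin--Schreier symbol is needed.
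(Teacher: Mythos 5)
Your proof is correct, but it takes a genuinely different route from the paper. The paper's proof is essentially a citation: it invokes Proposition~5.1 of \cite{Th2} (with $n=1$), which establishes the perfectness of the Artin--Schreier pairing, together with the remark that Pontryagin duality for these $p$-torsion groups agrees with $Hom(-,\Z/p\Z)$. You instead \emph{derive} perfectness from the nondegeneracy already recorded, using the Pontryagin biduality theorem: you check that $\phi_1$ and $\phi_2$ are well-defined continuous homomorphisms (continuity of $\phi_1$ via openness of $N_{L/K}L^\times$, a genuine local-class-field-theory input the paper leaves implicit), get injectivity from nondegeneracy, and get surjectivity of $\phi_1$ by observing that a proper closed subgroup of the profinite dual $\widehat{K/\wp(K)}$ is annihilated by a nonzero bidual character, which by biduality is evaluation at some $a+\wp(K)\neq 0$, contradicting nondegeneracy; the second map then follows by dualizing or by the symmetric argument. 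Your approach buys self-containedness and makes explicit the standard fact that for a continuous bilinear pairing of a discrete torsion group against a profinite group into $\Z/p\Z$, nondegeneracy plus biduality implies perfectness; the paper's approach is shorter but offloads the substance to Thomas's result. One small caveat: the nondegeneracy in the $b$-variable rests on Proposition~\ref{Artin-Schreier symbol}(iii) together with the class-field-theoretic fact that $\bigcap_L N_{L/K}L^\times = K^{\times p}$ over all degree-$p$ Artin--Schreier extensions $L/K$; since the paper asserts nondegeneracy just before the statement, you are entitled to use it, but it is worth noting that this intersection identity is the one nontrivial input hiding behind ``nondegeneracy.''
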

\begin{proof}
The result follows by taking $n=1$ in Proposition $5.1$ of \cite{Th2}, and from the fact that Pontryagin duality for the groups $K/\wp(K)$ and $K^{\times}/K^{\times p}$ coincide with $Hom(-,\mathbb{Z}/p\mathbb{Z})$ duality. Hence, there is an isomorphism of topological groups between each such group and its bidual.
\end{proof}

Let $B$ be a subgroup of the additive group of $K$ with finite index such that $\wp(K)\subseteq B\subseteq K$. The composite of two finite abelian Galois extensions of exponent $p$ is again a finite abelian Galois extension of exponent $p$. Therefore, the composite
\[
K_B=K(\wp^{-1}(B))=\prod_{a\in B}K(\wp^{-1}(a))
\]
is a finite abelian Galois extension of exponent $p$. On the other hand, if $L/K$ is a finite abelian Galois extension of exponent $p$, then $L=K_B$ for some subgroup $\wp(K)\subseteq B\subseteq K$ with finite index.

All such extensions lie in the maximal abelian extension of exponent $p$, which we denote by $K_p=K(\wp^{-1}(K))$. The extension $K_p/K$ is infinite and Galois. The corresponding Galois group $G_p=Gal(K_p/K)$ is an infinite profinite group and may be identified, under class field theory, with $K^{\times}/K^{\times p}$, see \cite[Proposition 5.1]{Th2}. The case $ch(K)=2$ leads to $G_2\cong K^{\times}/K^{\times 2}$ and will play a fundamental role in the sequel.

%\bigskip
%By Proposition \ref{K*/K^*p is profinite} and Proposition \ref{A-S symbol induce quadratic character} we conclude that Artin-Schreier theory gives an %isomorphism of topological groups (see \cite{Th2})
%\begin{equation}\label{A-S isomorphism}
%\mathfrak{as}:G_p \stackrel{\simeq}\longrightarrow Hom(K/\wp(K), \mathbb{Z}/p\mathbb{Z}).
%\end{equation}
%Explicitly, $\sigma\mapsto\phi_{\sigma}$,where $\phi_{\sigma}=(a+\wp(K)\mapsto\sigma(\alpha)-\alpha)$, for some $\alpha\in K^s$ such that %$\wp(\alpha)=a$.

\section{Quadratic characters}   From now on we take $K$ to be a local function field with $ch(K)=2$.
%nsider the ground field to be $K=\mathbb{F}_q((\varpi))$, where $q=2^f$.
Therefore, $K$ is of the form $\mathbb{F}_q((\varpi))$ with $q = 2^f$.
%a local function field of Laurent series with coefficients in $\mathbb{F}_q$.

Recall that a character of $K^{\times}$ is a group homomorphism
\[
\chi:K^{\times}\to \T
\]
where $\T = \{z \in \C : |z| = 1\}$ is the unit circle.
Denote by $\widehat{K^{\times}}$ the group of characters of $K^{\times}$. There is a canonical isomorphism
\[
\widehat{K^{\times}}\cong\widehat{\mathbb{Z}\times \mathfrak{o}^{\times}}\cong\mathbb{T}\times\widehat{\mathfrak{o}^{\times}}.
\]
 Therefore, given a character $\chi\in\widehat{K^{\times}}$, we may write
$\chi=z^{\nu}\chi_0$, where $z\in\mathbb{T}$, $\nu$ is the valuation and
$\chi_0\in\widehat{\mathfrak{o}^{\times}}$. If $\chi_0\equiv 1$ we say that $\chi$ is
unramified. A character $\chi$ of $K^{\times}$ is called quadratic if $\chi^2=1$. Since the unique quadratic character of $\mathbb{Z}$ is $(n\mapsto(-1)^n)$, a nontrivial quadratic character has the form $\chi=(-1)^{\nu}\chi_0$, with $\chi_0^2=1$.

When $K = \mathbb{F}_q((\varpi))$, we have,  according to \cite[p.25]{Iw},
\[
\mathfrak{o}^{\times} \cong \mathbb{Z}_2\times\mathbb{Z}_2\times\mathbb{Z}_2\times...=\prod_{\mathbb{N}}\mathbb{Z}_2
\]
with countably infinite many copies of $\mathbb{Z}_2$, the ring of $2$-adic integers.

Artin-Schreier theory provides a way to parametrize all the
quadratic extensions of $K=\mathbb{F}_q((\varpi))$. By Proposition \ref{K*/K^*p is profinite}, there is a
bijection between the set of quadratic extensions of
$\mathbb{F}_q((\varpi))$ and the group
\[
\mathbb{F}_q((\varpi))^{\times}/\mathbb{F}_q((\varpi))^{\times 2}\cong\prod_{\mathbb{N}}2\mathbb{Z}_2= G_2
\]
where $G_2$ is the Galois group of the maximal abelian extension of
exponent $2$. Since $G_2$ is an infinite profinite group, there are
countably many quadratic extensions.

To each quadratic extension $K(\alpha)/K$, with $\alpha^2-\alpha=a$, we associate the Artin-Schreier symbol
\[
[a,.) : K^{\times}/K^{\times 2}\rightarrow\mathbb{Z}/2\mathbb{Z}.
\]
Now, let $\varphi$ denote the isomorphism $\mathbb{Z}/2\mathbb{Z}\cong\mu_2(\mathbb{C})=\{\pm 1\}$ with the group of roots of unity. We obtain, by composing with the Artin-Schreier symbol, a unique multiplicative quadratic character $\chi_a=\varphi([a,.))$:

\begin{equation}\label{def A-S character}
\chi_a: K^{\times}\rightarrow\mathbb{C}^{\times}.
\end{equation}

Proposition \ref{A-S symbol induce quadratic character} shows that every quadratic character of $\mathbb{F}_q((\varpi))^{\times}$ arises in this way.

\begin{ex}\label{example: unr. quadratic ext.}
The unramified quadratic extension of $K$ is $K(\wp^{-1}(\mathfrak{o}))$, see \cite{Da} proposition $12$. According to Dalawat, the group $K/\wp(K)$ may be regarded as an $\mathbb{F}_2$-space and the image of $\mathfrak{o}$ under the canonical surjection $K\rightarrow K/\wp(K)$ is an $\mathbb{F}_2$-line, i.e., isomorphic to $\mathbb{F}_2$. Since $\wp_{|\mathfrak{p}}:\mathfrak{p}\rightarrow\mathfrak{p}$ is an isomorphism, the image of $\mathfrak{p}$ in $K/\wp(K)$ is $\{0\}$, see lemma $8$ in \cite{Da}. Now, choose any $a_0\in\mathfrak{o}\backslash\mathfrak{p}$. The quadratic character $\chi_{a_0}=\varphi([a_0,.))$ associated with $K(\wp^{-1}(\mathfrak{o}))$ via class field theory is precisely the unramified character $(n\mapsto(-1)^n)$ from above. Note that any other choice $b_0\in\mathfrak{o}\backslash\mathfrak{p}$, with $a_0\neq b_0$, gives the same unique unramified character, since there is only one nontrivial coset $a_0+\wp(K)$ for $a_0\in\mathfrak{o}\backslash\mathfrak{p}$.
\end{ex}

\smallskip

Let $\cG$ denote  $\SL_2(K)$, let $\cB$ be the standard Borel subgroup of $\cG$, let $\cT$ be the diagonal subgroup of $\cG$.
Let $\chi$ be a character of $\cT$. Then, $\chi$ inflates to a character of $\cB$. Denote by $\pi_{\chi}$ the (unitarily) induced representation $Ind_{\cB}^{\cG}(\chi)$. The representation space of $V_{\chi}$ of $\pi_{\chi}$ consists of locally constant complex valued functions $f:\cG\rightarrow\mathbb{C}$ such that, for every $a\in K^{\times}$, $b\in K$ and $g\in \cG$, we have

\[
f\bigg(\left( \begin{array}{cc}
 a & b \\
 0 & a^{-1}
\end{array} \right)\Bigg)=|a|\chi(a)f(g)
\]

The action of $\cG$ on $V_{\chi}$ is by right translation. The representations $(\pi_{\chi},V_{\chi})$ are called (unitary) principal series of $\cG=SL_2(K)$.

Let $\chi$ be a quadratic character of $K^{\times}$. The reducibility of the induced representation $\Ind_B^G(\chi)$ is well known in zero characteristic. W. Casselman proved that the same result holds in characteristic $2$ and any other positive characteristic $p$.

\begin{thm}\cite{Ca,Ca2}\label{Casselman's th.}
The representation $\pi_{\chi}= \Ind_{\cB}^{\cG}(\chi)$ is reducible if, and only if, $\chi$ is either $|.|^{\pm}$ or a nontrivial quadratic character of $K^{\times}$.
\end{thm}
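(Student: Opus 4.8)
The statement is a reducibility criterion for $\pi_\chi = \Ind_\cB^\cG(\chi)$, and the plan is to reduce it to Casselman's general results on the reducibility of principal series and to the structure of $K^\times/K^{\times 2}$ that we have already established. The standard intertwining-operator theory (valid over any local field, in particular over $K=\F_q((\varpi))$ of characteristic $2$, by \cite{Ca,Ca2}) says that $\Ind_\cB^\cG(\chi)$ is irreducible except when $\chi$ lies on the ``reducibility locus'', which for $\SL_2$ consists exactly of the characters $\chi$ with $\chi = \chi^w \cdot |\cdot|^{\pm 1}$ or $\chi = \chi^w$ with $\chi$ itself of order dividing $2$; here $w$ is the nontrivial Weyl element, and $\chi^w(a) = \chi(a^{-1}) = \chi^{-1}(a)$ since $\cT \cong K^\times$ via $\mathrm{diag}(a,a^{-1}) \mapsto a$. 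So the first step is to record this: $\chi^w = \chi^{-1}$, hence $\chi = \chi^w$ is equivalent to $\chi^2 = 1$, i.e.\ $\chi$ is quadratic (possibly trivial), and $\chi = \chi^w |\cdot|^{\pm 1}$ becomes $\chi^2 = |\cdot|^{\pm 1}$, whose solutions are $\chi = |\cdot|^{\pm 1/2}\eta$ with $\eta$ quadratic.

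The second step is to separate the two families and match them with the statement. The ``$|\cdot|^{\pm}$'' case of the theorem: when $\chi = |\cdot|^{\pm 1}$ the induced representation has the trivial (resp.\ Steinberg) constituent, reducibility is classical. One must also check that a twist $|\cdot|^{\pm 1/2}\eta$ with $\eta$ a \emph{nontrivial} quadratic character does \emph{not} produce a reducible $\SL_2$-induction beyond what is already listed --- this is where one has to be slightly careful about the difference between $\GL_2$ and $\SL_2$: a character of $\cT$ for $\SL_2$ is a single character of $K^\times$, and the half-integral twist $|\cdot|^{1/2}$ is itself a well-defined character of $K^\times$ in our setting only up to the unramified quadratic twist, so the apparently ``extra'' solutions collapse into the quadratic-character family. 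Concretely, in characteristic $2$ there is no ramified square root of $|\cdot|$ other than the unramified quadratic character times a genuine square root, and one invokes the Artin--Schreier description of $K^\times/K^{\times2}$ from Proposition \ref{K*/K^*p is profinite} to see that the relevant set of $\eta$'s is exhausted by the quadratic characters $\chi_a$ of \eqref{def A-S character}. After this bookkeeping the reducibility locus is precisely $\{|\cdot|^{\pm 1}\} \cup \{\text{nontrivial quadratic }\chi\}$, which is the assertion.

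The third step is simply to cite Casselman: the non-trivial direction (that these $\chi$ actually give reducible $\pi_\chi$) and the converse (that for all other $\chi$ the induction is irreducible) are both in \cite{Ca} in characteristic zero, and Casselman's argument --- based on Jacquet modules, the geometric lemma, and the rank-one intertwining operator, none of which uses the characteristic --- carries over verbatim to $\F_q((\varpi))$, as recorded in \cite{Ca2}. The only genuinely characteristic-$2$ input is that the quadratic characters are parametrized by $K^\times/K^{\times 2} \cong \prod_\N 2\Z_2$ (Proposition \ref{K*/K^*p is profinite}), so that there are countably infinitely many of them, each yielding a reducible principal series; this is what makes the conclusion nonvacuous and connects it to the $L$-packet parametrization of the introduction.

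The main obstacle I expect is the second step: making precise, in residue characteristic $2$, why the naive ``$\chi^2 = |\cdot|^{\pm1}$'' solutions do not enlarge the reducibility set beyond the quadratic characters and the two unramified characters $|\cdot|^{\pm 1}$ --- i.e.\ correctly handling the square roots of the modulus character as characters of $K^\times$ modulo $K^{\times 2}$, rather than as characters of $\GL_1$. Everything else is either a direct appeal to \cite{Ca,Ca2} or a consequence of the group-theoretic facts already proved in Section 2.
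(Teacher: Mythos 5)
The paper does not prove this theorem; it simply cites \cite[Theorems 1.7, 1.9]{Ca} and \cite[\S 9]{Ca2}.  Your proposal tries to actually derive the criterion, which is a different (and more ambitious) route, but the derivation contains a genuine error.

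Your first step asserts that the reducibility locus for $\SL_2$ consists of the characters $\chi$ with $\chi = \chi^w \cdot |\cdot|^{\pm 1}$ or $\chi=\chi^w$ of order dividing $2$.  Since $\chi^w=\chi^{-1}$, the first condition is $\chi^2=|\cdot|^{\pm 1}$.  This is not the reducibility condition.  Observe that $|\cdot|^{\pm 1}$ does \emph{not} satisfy $\chi^2=|\cdot|^{\pm 1}$ (because $|\cdot|^{\pm 2}\neq |\cdot|^{\pm 1}$), so your locus fails even to contain the two points $|\cdot|^{\pm 1}$ at which $\pi_\chi$ genuinely is reducible (trivial plus Steinberg).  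Conversely, the solutions $\chi=\eta\,|\cdot|^{\pm 1/2}$ with $\eta$ quadratic, which your locus does contain, give \emph{irreducible} induced representations for $\SL_2$.  The correct statement — which is exactly Casselman's theorem — is that $\pi_\chi$ is reducible iff $\chi=|\cdot|^{\pm 1}$ or $\chi^2 = 1, \chi\neq 1$; the pole/zero locus $\chi^2=|\cdot|^{\pm 1}$ of the unnormalized intertwining operator is a different set and does not coincide with the reducibility locus.

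Your second step tries to repair this by arguing that the ``extra'' solutions $\chi=|\cdot|^{\pm 1/2}\eta$ collapse, because ``$|\cdot|^{1/2}$ is well-defined only up to the unramified quadratic twist.''  This is true but irrelevant: the ambiguity in the square root only identifies $|\cdot|^{1/2}$ with $\epsilon\,|\cdot|^{1/2}$ (one unramified twist), it does not identify $|\cdot|^{1/2}\eta$ for ramified $\eta$ with any quadratic character of $K^\times$, and it certainly does not show that those characters fall inside $\{|\cdot|^{\pm 1}\}\cup\{\chi^2=1,\chi\neq 1\}$.  The characteristic $2$ plays no role in this step, so invoking Artin--Schreier theory here cannot rescue it.  The clean way to obtain the statement is what the paper does: cite Casselman for the criterion (his proof via Jacquet modules and the rank-one intertwining operator is characteristic-free) and use the Artin--Schreier parametrization of $K^\times/K^{\times 2}$ only afterwards, to enumerate the nontrivial quadratic $\chi$ and hence the $L$-packets.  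Your third step (the transfer from characteristic $0$ to $\F_q((\varpi))$) and the observation that quadratic characters are parametrized by $K^\times/K^{\times 2}$ are fine and match the paper's intent.
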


For a proof see \cite[Theorems 1.7, 1.9]{Ca} and \cite[\S 9]{Ca2}.

\medskip

From now on, $\chi$ will be a quadratic character. It is a classical result that the unitary
principal series for $\GL_2$ are irreducible. For a representation of $\GL_2$ parabolically induced by
$1\otimes\chi$, Clifford theory tells us that the dimension of the intertwining algebra of its restriction to
$\SL_2$ is $2$. This is exactly the induced representation of $\SL_2$ by $\chi$:
\[
\Ind_{\widetilde{B}}^{\GL_2(K)}(1\otimes\chi)_{|\SL(2,K)}\stackrel{\simeq}\longrightarrow \Ind_{B}^{\SL_2(K)}(\chi)
\]
where $\widetilde{B}$ denotes the standard Borel subgroup of $\GL_2(K)$. This leads to reducibility of the induced representation
$\Ind_B^G(\chi)$ into two inequivalent constituents.  Thanks to M. Tadic for helpful comments at this point.

The two irreducible constituents
\begin{align}
\pi_{\chi}= \Ind_{B}^{G}(\chi)=\pi_{\chi}^+\oplus\pi_{\chi}^-
\end{align}
define an $L$-packet $\{\pi_{\chi}^+ , \pi_{\chi}^-\}$ for $\SL_2$.

\section{A commutative triangle}

In this section we confirm part of the  geometric conjecture in \cite{ABPS} for $\SL_2(\mathbb{F}_q((\varpi)))$. We begin by recalling the underlying ideas of the conjecture.

Let $\cG$ be the group of $K$-points of a connected reductive group over a nonarchimedean local field $K$.   We have the \emph{Bernstein decomposition}
\[
\Irr(\cG)  = \bigsqcup \Irr(\cG)^{\fs}
\]
over all points $\fs \in \mathfrak{B}(\cG)$ the Bernstein spectrum of $\cG$, see \cite{R}.

Let $\chi_{a_0}=\varphi([a_0,.))$ denote the unramified character of $K^{\times}$ associated with the unramified quadratic extension $K(\alpha_0)=K(\wp^{-1}(\mathfrak{o}))$ as in example \ref{example: unr. quadratic ext.}. Fix a quadratic character $\chi_a=\varphi([a,.))$ associated via class field theory with the quadratic extension $K(\alpha)$ (in a fixed algebraic closure $\overline{K}$), where $\alpha^2-\alpha=a$.

\begin{prop}\label{compositum quadratic ext}
There is a unique quadratic extension $K(\beta)$ with associated character $\chi_{a_0+a}$. Moreover, $\chi_{a_0+a}=\chi_{a_0}\chi_a$.
\end{prop}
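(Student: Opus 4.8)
The plan is to exploit the isomorphism $K^{\times}/K^{\times 2}\xrightarrow{\ \simeq\ }\mathrm{Hom}(K/\wp(K),\Z/2\Z)$ of Proposition 2.13 together with the bilinearity of the Artin-Schreier symbol recorded in Proposition 2.7. First I would recall that, by Proposition 2.7$(i)$, for every $b\in K^{\times}$ we have
\[
[a_0+a,b)=[a_0,b)+[a,b).
\]
Applying $\varphi$, which is a group isomorphism $\Z/2\Z\cong\mu_2(\C)$, turns the sum on the right into a product in $\mu_2(\C)$, so that $\chi_{a_0+a}(b)=\varphi([a_0,b))\cdot\varphi([a,b))=\chi_{a_0}(b)\,\chi_a(b)$ for all $b\in K^{\times}$; hence $\chi_{a_0+a}=\chi_{a_0}\chi_a$ as characters of $K^{\times}$. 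This is the second assertion.

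For the first assertion I would argue as follows. Since $\chi_{a_0}$ is nontrivial (it is the unramified quadratic character of Example 3.2) and $\chi_a$ corresponds to a genuine quadratic extension, both $a_0$ and $a$ lie outside $\wp(K)$; the question is whether $a_0+a\in\wp(K)$. By Proposition 2.7$(iv)$, $a_0+a\in\wp(K)$ if and only if $[a_0+a,b)=0$ for all $b\in K^{\times}$, i.e.\ if and only if $\chi_{a_0}=\chi_a$. Because $\chi_{a_0}$ is the \emph{unramified} quadratic character and $\chi_a$ is attached to a prescribed, fixed quadratic extension $K(\alpha)$, the two coincide precisely when $K(\alpha)$ is itself the unramified quadratic extension; outside that degenerate case, $a_0+a\notin\wp(K)$, so by Theorem 2.1$(ii)$ the polynomial $\wp(x)-(a_0+a)$ is irreducible and $K(\beta):=K(\wp^{-1}(a_0+a))$ is a quadratic extension of $K$. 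Its associated character is, by definition \eqref{def A-S character}, $\chi_{a_0+a}$. Uniqueness follows from Proposition 2.12 (equivalently Proposition 2.13): the quadratic extensions of $K$ are in bijection with $K^{\times}/K^{\times 2}$ via the Artin-Schreier symbol, equivalently with the nontrivial quadratic characters, so the character $\chi_{a_0+a}$ determines the extension $K(\beta)$ uniquely — and at the level of the group $K/\wp(K)$ the class of $a_0+a$ is the unique class realizing it.

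The one point requiring care — and the place I expect the real (if modest) obstacle to lie — is the degenerate case $\chi_a=\chi_{a_0}$, where $a_0+a\in\wp(K)$ and no quadratic extension $K(\beta)$ exists: there the statement should be read with the standing convention that $K(\alpha)$ is a fixed quadratic extension \emph{distinct} from the unramified one, so that $\chi_a\neq\chi_{a_0}$ and the sum $\chi_{a_0}\chi_a$ is again a nontrivial quadratic character. I would make this hypothesis explicit. Apart from that, everything is a formal consequence of bilinearity of $[\,\cdot\,,\cdot)$ and the perfectness of the Artin-Schreier pairing, so no serious computation is involved.
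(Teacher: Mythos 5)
Your proof is correct, and the computation of $\chi_{a_0+a}=\chi_{a_0}\chi_a$ via bilinearity of $[\,\cdot\,,\cdot\,)$ and the isomorphism $\varphi$ is exactly what the paper does. Where you diverge is in establishing the \emph{existence} of $K(\beta)$: the paper forms the compositum $K(\alpha)K(\alpha_0)$, observes that it is Galois over $K$ with group $\Z/2\Z\times\Z/2\Z$, and identifies $K(\beta)$ as the third quadratic subfield, with $\beta^2-\beta=a_0+a$; you instead argue directly at the level of the pairing that $a_0+a\notin\wp(K)$, invoke Theorem 2.1$(ii)$ to get the quadratic extension $K(\wp^{-1}(a_0+a))$, and then use the perfectness of the Artin--Schreier pairing (Propositions \ref{K*/K^*p is profinite}--\ref{A-S symbol induce quadratic character}) for uniqueness. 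The paper's compositum argument is slightly slicker in that it simultaneously produces $K(\beta)$ and exhibits the biquadratic field whose three quadratic subfields encode the relation $\chi_{a_0}\chi_a\chi_{a_0+a}=1$; your version stays entirely within the symbol calculus and avoids Galois theory of the compositum. A genuine virtue of your write-up is that you flag the degenerate case $\chi_a=\chi_{a_0}$ explicitly: the paper's claim that $\mathrm{Gal}\bigl(K(\alpha)K(\alpha_0)/K\bigr)\cong\Z/2\Z\times\Z/2\Z$ silently assumes $K(\alpha)\neq K(\alpha_0)$, and your observation that one must impose this (equivalently, $a_0+a\notin\wp(K)$) for the statement to make sense is a small but real improvement in rigor over the paper's formulation.
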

\begin{proof}
The compositum $K(\alpha)K(\alpha_0)$ is Galoisian over $K$, with Galois group $\mathbb{Z}/2\mathbb{Z}\times\mathbb{Z}/2\mathbb{Z}$. Therefore, contains three subfields, which are quadratic extensions of $K$, namely $K(\alpha_0)$, $K(\alpha)$ and, say, $K(\beta)$. The extension $K(\beta)$ is such that $\beta^2-\beta=a_0+a$, and has an associated quadratic character given by $\chi_{a_0+a}$. Hence
$$\chi_{a_0+a}=\varphi([a_0+a,.))=\varphi([a_0,.)+[a,.))=\varphi([a_0,.))\varphi([a,.))=\chi_{a_0}\chi_a.$$
\end{proof}

\medskip

By theorem \ref{Casselman's th.}, the induced representations
\[
\pi_{a_0}= \Ind_{\cB}^{\cG}(\chi_{a_0}) \textrm{ , }\pi_a= \Ind_{\cB}^{\cG}(\chi_a)\textrm{ and }\pi_{a_0+a}= \Ind_{\cB}^{\cG}(\chi_{a_0+a})
\]
are reducible and split into a direct sum of two irreducible component.

\bigskip

Central to the geometric conjecture is the concept of extended quotient of the second kind, which we now define.

Let $W$ be a finite group and let $X$ be a complex affine algebraic variety. Suppose that $W$ is acting on $X$ as automorphisms of $X$. Define
\[
\widetilde{X}_2:  =  \{(x,\tau) : \tau \in \Irr(W_x)\}.
\]
Then $W$ acts on $\widetilde{X}_2$:
\[
\alpha(x, \tau)=(\alpha \cdot x,\alpha_* \tau).
\]
\begin{defn}
The extended quotient of the second kind is defined as
\[
(X\q W)_2 :=\widetilde{X}_2/W.
\]
\end{defn}
Thus the extended quotient of the second kind is the ordinary quotient for the action of $W$ on $\widetilde{X}_2$.

\begin{thm} \label{Triangle}
Let $\cG = \SL_2(K)$ with $K = \F_q((\varpi)))$.   Let $\fs = [\cT,\chi]_G$ be a point in the Bernstein spectrum for the principal series of
$\cG$.    Let $\Irr(\cG)^{\fs}$ be the corresponding  Bernstein component in $\Irr(\cG)$. Then the conjecture \cite{ABPS} is valid for $\Irr(\cG)^{\fs}$
i.e. there is a commutative triangle of natural bijections
\[
\xymatrix{
& (T^{\fs}/\!/W^\fs)_2 \ar[dr]\ar[dl] & \\
\Irr(\mathcal{G})^\fs    \ar[rr] & & \fL(G)^\fs
%\{\mathrm{enhanced}\; L-\mathrm{parameters}\}^\fs/ H^\fs }
}
\]
where  $\fL(G)^\fs$ denotes the equivalence classes of enhanced parameters attached to $\fs$.
\end{thm}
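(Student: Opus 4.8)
The strategy is to identify all three vertices of the triangle explicitly and then exhibit the two diagonal maps as bijections, deducing the bottom horizontal map by composition. First I would describe the Bernstein variety $T^{\fs}$: for the principal series point $\fs = [\cT,\chi]_G$ with $\chi$ a quadratic character, the torus $\cT \cong K^\times$ has character group containing the unramified twists $z \mapsto z^{\nu}\chi$, so $T^{\fs}$ is a quotient of $\C^\times$; because $\chi^2 = 1$, the relevant finite group $W^{\fs}$ is the quotient of the stabilizer of $\fs$ in the Weyl group $W(\cG,\cT) = \Z/2\Z$ (combined with the character-twist action), and one checks that $W^{\fs} = \Z/2\Z$ acts on $T^{\fs}$ by inversion. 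Thus $T^{\fs}/\!/W^{\fs}$ is $\C^\times/(z\sim z^{-1})$ with two fixed points $z = \pm 1$, and the extended quotient of the second kind $(T^{\fs}/\!/W^{\fs})_2$ adds, at each of those two fixed points, the two irreducible characters of $\Z/2\Z$, while away from the fixed points $W_x$ is trivial. So $(T^{\fs}/\!/W^{\fs})_2$ is a line with two doubled points.

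Next I would make the left diagonal map $(T^{\fs}/\!/W^{\fs})_2 \to \Irr(\cG)^{\fs}$ precise. Away from the fixed points, the induced representation $\pi_{z^{\nu}\chi}$ is irreducible (Theorem \ref{Casselman's th.}, since a generic unramified twist of a quadratic character is neither $|\cdot|^{\pm 1}$ nor quadratic), giving a bijection on the generic part. At the two fixed points $z = \pm 1$ one has $\chi$ itself (for $z = 1$) and $\chi$ times the unramified order-two character (for $z = -1$); at each such point Theorem \ref{Casselman's th.} together with the Clifford-theory argument in the excerpt shows $\pi$ splits as $\pi^{+} \oplus \pi^{-}$ into two inequivalent irreducibles, and the two characters $\tau$ of $W_x = \Z/2\Z$ are matched with $\pi^{+}$ and $\pi^{-}$. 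One must check this assignment is well-defined (independent of the choice of representative in the $W$-orbit) and exhausts $\Irr(\cG)^{\fs}$ — the latter is the classification of the principal series block for $\SL_2$, which follows from Casselman's theorem plus the fact that every irreducible subquotient of an unramified-twist family arises this way.

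For the right diagonal map $(T^{\fs}/\!/W^{\fs})_2 \to \fL(G)^{\fs}$, I would use the Langlands dual side: an enhanced parameter attached to $\fs$ is a pair $(\phi, \rho)$ with $\phi: W_K \times \SL_2(\C) \to \PGL_2(\C)$ a parameter whose restriction to inertia is (a twist compatible with) the fixed $\chi$, and $\rho$ an irreducible character of the component group $\mathcal{S}_\phi = Z_{\PGL_2(\C)}(\im\phi)/Z(\ldots)$. The generic unramified twists correspond to parameters with $\mathcal{S}_\phi$ trivial; the two special points correspond to the two parameters (one for the unramified quadratic character, say $z=-1$, and one for $z=1$) whose centralizer component group is $\Z/2\Z$, and then $\rho$ ranges over its two characters. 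Matching these component groups and characters with the stabilizers $W_x$ and their irreducible characters gives the bijection, and commutativity of the triangle is then the compatibility between parabolic induction and the construction of Langlands parameters by induction from the torus — i.e. the local Langlands correspondence for the torus $K^\times$ (class field theory, which is exactly the Artin–Schreier description in \S2) together with functoriality of normalized induction. I expect the main obstacle to be this last commutativity check: one must verify that the two diagonal maps, defined on the topological/representation-theoretic side and on the Galois side respectively, agree point by point at the two singular points — in particular that the identification of the two constituents $\pi_\chi^{\pm}$ with the two characters of $\mathcal{S}_\phi$ is the one dictated by the (normalization of the) local Langlands correspondence, rather than off by the nontrivial character. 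This is where the explicit Artin–Schreier symbol and the careful bookkeeping of the unramified vs. ramified fixed point (Example \ref{example: unr. quadratic ext.}) do the real work.
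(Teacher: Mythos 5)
Your overall strategy — explicitly parametrizing the three vertices and matching them at the generic locus and at the two reducibility points — is the same one the paper uses. There is, however, a genuine gap in your description of the right diagonal map (and it is not the normalization issue you flagged).

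You assert that \emph{both} fixed points $z = \pm 1$ in $(T^{\fs}\q W^{\fs})_2$ correspond to a single Langlands parameter with $\cS_\phi \simeq \Z/2\Z$, whose two enhancements give the two constituents. This is correct when the base character $\chi$ is a nontrivial \emph{ramified} quadratic character (so that both $\chi$ and $\epsilon\chi$ are nontrivial quadratic), and that is exactly the paper's Case~1. But when $\fs = [\cT,1]_{\cG}$ is the unramified component, the fixed point $z = 1$ carries the \emph{trivial} character of $\cT$, the induced representation has the Steinberg and the trivial representation as composition factors, and these two irreducibles have \emph{two distinct} $L$-parameters: the principal parameter $\phi_0$ (with $\cS_{\phi_0} = 1$) and the trivial parameter $\phi_1$ (with $\cS_{\phi_1} = 1$). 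Neither has component group $\Z/2\Z$. So at this point the map from $(1,\triv)$ and $(1,\rho)$ to enhanced parameters is \emph{not} ``fix $\phi$ and vary $\rho$'' — it is ``vary $\phi$ and keep $\rho = \triv$.'' The paper handles this as a separate Case~2 precisely because the extended quotient of the second kind does not literally mirror the pairs $(\phi,\rho)$ here, even though the cardinalities match. Your argument, as written, would wrongly predict a unique parameter with a nontrivial component group at $z=1$ for the unramified block.

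Two lesser remarks: your proposal quietly restricts to $\chi$ quadratic and never treats the Bernstein components with $\chi^2 \neq 1$ (the paper's Case~3); that case is trivial — singleton $L$-packets and $\cS_\phi = 1$ throughout — but should be stated. And the commutativity/normalization check you single out as ``the main obstacle'' is indeed a real issue, but the paper does not resolve it in detail either; it presents the triangle at the level of sets of elements and relies on the formal-degree and structure-of-packet constraints elsewhere, so you are not missing something the paper supplies.
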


\begin{proof}    We recall that $(G,T)$ are the complex dual groups of $(\cG,\cT)$.   Let $\W_K$ denote the Weil group of $K$.
%We will set $G = \PGL_2(\C)$.
  If $\phi$ is an $L$-parameter
  \[
  \W_K \times \SL_2(\C) \to G
  \]
   then $\mathcal{S}_{\phi}$ is defined as follows:
\[
\cS_{\phi}: = \pi_0 \, C_G(\im \, \phi).
\]

By an \emph{enhanced} Langlands parameter, we shall mean a pair $(\phi, \rho)$ where $\phi$ is a parameter and $\rho \in \Irr(\cS_{\phi})$.  Following Reeder
\cite{Reed}, we shall denote an enhanced Langlands parameter by $\phi(\rho)$.

\textbf{Case 1}.  Let $\chi$ be a quadratic character of $\cT$: $\chi^2 = 1, \chi \neq 1$.    Let $L/K$ be the quadratic extension
determined by  $\chi$.    Now $G$ contains a unique (up to conjugacy) subgroup
$H \simeq \Z/2\Z$.   Each quadratic extension $L/K$ creates a parameter
\[
\phi_L : \W_K \to \Gal(L/K) \to G.
\]
The map $\Gal(L/K) \to H$ factors through $K^{\times}/N_{L/K} L^{\times}$:
\[
\phi_L : \W_K \to \Gal(L/K) \simeq K^{\times}/N_{L/K} L^{\times} \to H \to G.
\]
which shows that $\phi_L$ is the parameter attached to the packet $\pi_{\chi}$.

%which is a non-discrete parameter.
% with $S_{\phi_L}	=  H$, since $C_G(H) = H$,  and whose conjugacy class depends only on $L$, since $O/H = \Aut(H)$.

To compute $\cS_{\phi_L}$, let $1,w$ be representatives of the Weyl group $W = W(G)$.   Then we have
\[
C_G(\im\,\varphi_L) = T \sqcup wT
\]
So $\phi$ is a non-discrete parameter, and we have
\[
\cS_{\phi_L} \simeq \Z/2\Z.
\]

We have two enhanced Langlands parameters, namely $\phi_L(\triv)$ and $\phi_L(\rho)$ where $\rho$ is the nontrivial character of $\cS_{\phi_L}$.

Now define
\[
\chi(\varpi) =
\chi \left(
\begin{array}{cc}
\varpi & 0\\
0 & \varpi^{-1}
\end{array}
\right)
\]
where $\varpi$ is a uniformizer in $K$.

%Let $\cT_0$ denote the maximal compact subgroup of $\cT$.  We write $\cT = \cT_0 \times X$ and factorize $\chi$ as follows $\chi = \chi_0 \otimes %\psi$ so that $\psi$ is an unramifed character.   Let $\varpi_K$ be a uniformizer in $K$.

Since $\chi^2 = 1$, there is a point of reducibility.   We have, at the level of elements,

%Let $\chi|\cT_0 \neq 1,\; \psi^2 = 1$,  two points of reducibility in the ramified unitary principal series.     In this case, we have, at the level of elements,

\[
\xymatrix{
& \{(\chi(\varpi), \triv), (\chi(\varpi), \rho)\} \ar[dr]\ar[dl] & \\
\{\pi_{\chi}^+, \pi_{\chi}^- \}    \ar[rr] & & \{\phi_{\chi}(\triv), \phi_{\chi}(\rho)\}}
\]

%Let $\chi|\mathfrak{o}_K  = 1, \;\psi^2 \neq 1$.   In this case, we have, at the level of elements, \[\xymatrix{& (\psi(\varpi_K), \triv) \ar[dr]\ar[dl] & \\
%\pi_{\psi}    \ar[rr] & & \phi_{\psi}}\]

%Let $\chi| \cT_0  = 1, \;\psi(x) = (-1)^{\val_K(x)}$, the unique point of reducibility in the unramified unitary principal series.   We have, at the level of %elements,\[\xymatrix{& \{(\psi(\varpi_K), \triv), (\psi(\varpi_K), \rho)\} \ar[dr]\ar[dl] & \\\{\pi_{\psi}^+, \pi_{\psi}^-\}    \ar[rr] & & \{\phi_{\psi}(\triv), \phi_{\psi}(\rho)\}}\]

\textbf{Case 2}.   Let $\chi  = 1$.
%There is a unique (up to $G$-conjugacy) homomorphism  $\Psi : \SL_2(\C) \to  G$, called \emph{principal}, such that $\Psi(B_0)$ is contained in exactly one Borel %subgroup of $G$.
The \emph{principal parameter} is the composite map
\[
\phi_0 : \W_K \times  \SL_2(\C)  \to \PSL(2,\C).
\]
defined by extending the \emph{principal} homomorphism $ \SL_2(\C) \to \PSL_2(\C)$ trivially on $\W_K$, is a canonical discrete parameter
for which $\cS_{\phi_0} = 1$.
%, whose centralizer  $S_{\phi_0}$, the centre of  $G$, is as small as possible.
In the local Langlands correspondence for $\cG$, the enhanced parameter $\phi_0(\triv)$ corresponds to the Steinberg representation $\St_{\cG}$, see \cite[6.1.8]{Reed}.

Let $\phi_1$ be the unique parameter for which $\phi_1(\W_K \times \SL_2(\C)) = 1$.   We have
\[
\im\,\phi_1 = 1, \quad C_G(\im\,\phi_1) = G, \quad \cS_{\phi_1} = 1.
\]
 There is a unique enhanced parameter, namely $\phi_1(\triv)$.
We have, at the level of elements, the commutative triangle

\[
\xymatrix{
& \{(1, \triv), (1, \rho)\} \ar[dr]\ar[dl] & \\
\{ \St_{\cG}, 1_{\cG}\}    \ar[rr] & & \{\phi_0(\triv), \phi_1(\triv)\}
}
\]

\textbf{Case 3}. $\chi^2 \neq 1$.   There are no points of reducibility, and we have a commutative triangle of  sets, each with one element:

\[\xymatrix{& (\psi(\varpi ), \triv ) \ar[dr]\ar[dl] & \\\pi_{\chi}    \ar[rr] & & \phi_{\chi}}\]

\end{proof}

\begin{cor}\label{parametrization of L-parameters} Let $L/K$ be a quadratic extension of $K$.
The $L$-parameters $\phi_L$ serve as parameters for the $L$-packets in the principal series of $\SL_2(K)$.
\end{cor}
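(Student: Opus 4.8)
The plan is to derive the statement directly from Theorem~\ref{Triangle} together with the two parametrizations already assembled in the excerpt: Artin--Schreier theory on the automorphic side and class field theory on the Galois side. Recall first that, by the convention fixed in the Introduction, the $L$-packets in the principal series of $\cG=\SL_2(K)$ are the non-singleton ones, and by Theorem~\ref{Casselman's th.} these are precisely the pairs $\{\pi_\chi^+,\pi_\chi^-\}$ attached to a nontrivial quadratic character $\chi$ of $\cT=K^\times$; the characters $|\cdot|^{\pm1}$ are not quadratic and give rise to the Steinberg and trivial singletons of Case~2 of Theorem~\ref{Triangle}, while a character with $\chi^2\neq1$ yields an irreducible, hence singleton, induced representation (Case~3). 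By the bijection between quadratic extensions of $K$ and $K^\times/K^{\times2}$ of Section~3, refined into a statement about characters by Proposition~\ref{A-S symbol induce quadratic character}, every such $\chi$ equals $\chi_a=\varphi([a,\cdot))$ for a unique quadratic extension $L=K(\alpha)$ with $\wp(\alpha)=a$, and conversely. Composing the two bijections gives a canonical bijection
\[
\{\,\text{quadratic extensions }L/K\,\}\;\longleftrightarrow\;\{\,L\text{-packets in the principal series of }\cG\,\},\qquad L\longmapsto\{\pi_{\chi_a}^+,\pi_{\chi_a}^-\}.
\]

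Next I would invoke Case~1 of the proof of Theorem~\ref{Triangle}. For each quadratic $L/K$ the parameter
\[
\phi_L:\W_K\to\Gal(L/K)\simeq K^\times/N_{L/K}L^\times\to H\hookrightarrow G
\]
is well defined up to $G$-conjugacy: the first arrow is the natural surjection of $\W_K$ onto the finite quotient $\Gal(L/K)$, identified by class field theory with $K^\times/N_{L/K}L^\times$, and the remaining maps identify this order-two group with $H$, the unique conjugacy class of order-two subgroups of the complex dual group $G$. Case~1 shows that $\cS_{\phi_L}=\pi_0\,C_G(\im\,\phi_L)\simeq\Z/2\Z$, so $\phi_L$ carries exactly the two enhancements $\phi_L(\triv)$ and $\phi_L(\rho)$, with $\rho$ the nontrivial character of $\cS_{\phi_L}$, and the elementwise commutative triangle there identifies $\{\phi_L(\triv),\phi_L(\rho)\}$ with the packet $\{\pi_{\chi_a}^+,\pi_{\chi_a}^-\}$. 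Thus $\phi_L$ is the common $L$-parameter of that packet, its two members being separated only by the enhancement; in view of the bijection above, the $\phi_L$ indeed serve as parameters for the principal-series $L$-packets.

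It remains to observe that $L\mapsto\phi_L$ is injective, so that distinct quadratic extensions yield distinct parameters. For $L\neq K$ the homomorphism $\phi_L$ has image $H$ of order $2$ and factors through $\Gal(L/K)$, on which it is injective; hence $\ker\phi_L=\Gal(K^s/L)$ and $\phi_L$ recovers $L$ as the fixed field of its kernel. (Equivalently, by Case~1 of Theorem~\ref{Triangle} the parameter $\phi_L$ determines the packet and hence the character $\chi_a$, which in turn determines $L$ by the Artin--Schreier bijection.) Combined with the bijection of the first paragraph, this makes $L\mapsto\phi_L$ an injection whose image parametrizes all principal-series $L$-packets.

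The well-definedness of $\phi_L$ up to conjugacy and the compatibility of the class field theory normalizations on the two sides are routine. The only genuine point is the assertion, already established in Case~1 of Theorem~\ref{Triangle} (the line ``which shows that $\phi_L$ is the parameter attached to the packet $\pi_\chi$''), that the Artin--Schreier character $\chi_a$ attached to $L$ is the same character of $\cT$ whose induced representation $\pi_{\chi_a}$ splits into the packet parametrized by $\phi_L$; the present corollary is essentially the repackaging of that single identification simultaneously over all quadratic extensions $L/K$.
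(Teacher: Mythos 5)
Your proposal is correct and follows the same route the paper intends: the corollary is left without an explicit proof precisely because it is an immediate consequence of Case~1 of Theorem~\ref{Triangle} together with the Artin--Schreier bijection between quadratic extensions and nontrivial quadratic characters from \S3 and Proposition~\ref{A-S symbol induce quadratic character}. Your write-up simply makes that chain of identifications (Casselman's criterion $\to$ quadratic characters $\to$ quadratic extensions $\to$ parameters $\phi_L$ with $\cS_{\phi_L}\simeq\Z/2\Z$) explicit, and the added injectivity check on $L\mapsto\phi_L$ is a harmless, sound elaboration.
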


It follows from \S3 that there are countably many $L$-packets in the principal series of $\SL_2(K)$.

\section{The tempered dual}   The following picture

\bigskip

\begin{tikzpicture}\label{pic}
\hskip 4.0cm
\draw (3,0) arc (0:180:1cm);
\draw (0.9,0) circle (0.08cm);
\fill[black] (0.9,0) circle (0.08cm);
\draw (1.1,0) circle (0.08cm);
\fill[black] (1.1,0) circle (0.08cm);
%\draw (1.0,-0.3) node[scale=0.7] {$\pi_{a}$};
\draw (2.9,0) circle (0.08cm);
\fill[black] (2.9,0) circle (0.08cm);
\draw (3.1,0) circle (0.08cm);
\fill[black] (3.1,0) circle (0.08cm);
%\draw (3.0,-0.3) node[scale=0.7] {$\pi_{a_0+a}$};
\end{tikzpicture}

\bigskip
\noindent serves two purposes.   First, it is an accurate portrayal of the extended quotient of the second kind
\[
(\mathbb{T}\q W)_2
\]
where $\T = \{ z \in \C : |z| = 1\}$ and the generator of $W = \Z/2\Z$ acts on  $\T$ sending $z$ to $z^{-1}$.    Secondly, it is (conjecturally) an accurate portrayal of a connected component in the tempered dual of $\cG$.

The topology on $(\T\q W)_2$ comes about as follows.  Let
\[
\Prim ( C(\T) \rtimes W)
\]
  denote the primitive ideal space of the noncommutative
$C^*$-algebra $C(\T) \rtimes W$.   By the classical Mackey theory for semidirect products, we have a canonical bijection
\begin{align}\label{JJJ}
\Prim ( C(\T) \rtimes W) \simeq (\T\q W)_2.
\end{align}
The primitive ideal space on the left-hand side of (\ref{JJJ}) admits the Jacobson topology.   So the right-hand side of (\ref{JJJ}) acquires, by transport of structure, a compact non-Hausdorff topology.  The  picture at the beginning of this section  is intended to portray this topology.   We shall see that the Langlands parameters respect this topology.    The double-points in the picture arise precisely when the corresponding induced representation has length $2$.

\medskip

 The Plancherel Theorem of Harish-Chandra is valid for any local non-archimedean field, see Waldspurger \cite{W}.   This implies that, in the case at hand, the discrete series and the unitary principal series enter into the Plancherel formula.   That is, the tempered dual of $\cG$ comprises the discrete series and the irreducible constituents in the unitary principal series.

We now focus on the case of induced elements.

\medskip

Suppose $\chi^2\neq 1$, with  $\mathfrak{s}=[\cT,\chi]_\cG$.    Let $\psi$ be an unramified unitary character of
$\cT$.   Then we have a natural bijection
\begin{align}\label{T1}
\Irr^{\temp}(G)^{\fs} \simeq \T, \quad \quad \Ind \,\pi_{\psi\chi} \mapsto \psi(\varpi).
\end{align}

\medskip

Suppose $\chi^2 = 1, \chi \neq 1$, with  $\fs=[T,\chi]_G$. Let $W = \Z/2\Z$.   Then we have a bijective map
\begin{align}\label{T2}
\Irr^{\temp}(G)^{\fs} \simeq (\T\q W)_2.
\end{align}
This map is defined as follows.   Let $\rho$ denote the nontrivial character
of $W$.
\begin{itemize}
\item If $\psi^2 \neq 1$, send $\Ind \,\pi_{\psi\chi}$ to $\psi(\varpi)$.
\item   If $\psi = 1$, send the pair of irreducible constituents
$\pi_{\chi}^+, \pi_{\chi}^-$ to the pair of points  $(1, \triv), \; (1,\rho) \in (T\q W)_2$.
\item If $\psi = \epsilon$  the unique unramifed \emph{quadratic} character of $\cT$, send
 the pair of irreducible constituents
$\pi_{\epsilon\chi}^+, \pi_{\epsilon\chi}^-$ to the pair of points  $(- 1, \triv), \; ( - 1,\rho) \in (T\q W)_2$.
\end{itemize}

\medskip

Suppose $\chi=1$  and let $\fs_0=[T,1]_G$. Then we have a continuous bijection which is \emph{not} a homeomorphism:
\begin{align}\label{T3}
\Irr^{\temp}(G)^{\fs} \to  (\T\q W)_2.
\end{align}

\begin{itemize}
\item If $\psi^2 \neq 1$, send $\Ind \,\pi_{\psi\chi}$ to $\psi(\varpi)$.
\item   If $\psi = 1$, send the irreducible representations $\triv_{\cG}, \St_{\cG}$
 to the pair of points  $(1, \triv), \; (1,\rho) \in (T\q W)_2$.
\item If $\psi = \epsilon$   send
 the pair of irreducible constituents
$\pi_{\epsilon}^+, \pi_{\epsilon}^-$ to the pair of points  $(- 1, \triv), \; ( - 1,\rho) \in (T\q W)_2$.
\end{itemize}

\bigskip

By  proposition \ref{compositum quadratic ext} and the above argument, we may represent that part of the tempered dual $\Irr^{\temp}(\SL_2(\mathbb{F}_q((\varpi))))$ which corresponds to the unitary principal series in a diagram along the lines of \cite[p.418]{Pl2}.

\bigskip

\begin{tikzpicture}
\hskip 2.0cm
\draw (0,0) arc (0:180:1cm);
\draw (-2.1,0) circle (0.08cm);
\fill[black] (-2.1,0) circle (0.08cm);
\draw (-1.9,0) circle (0.08cm);
\fill[black] (-1.9,0) circle (0.08cm);
\draw (-2,-0.3) node[scale=0.7] {$\pi_{a_0}$};
\draw (0,0) circle (0.08cm);
\fill[black] (0,0)circle (0.08cm);
\draw (0.0,-0.3) node[scale=0.7] {$\pi_1$};
%%%%%%%%%%%%%%%%%%%%%%%%%%%%%%%%%%%%%%%%%%%%%%%%%%%%%%%%%%%%%
%%%%%%%% 1st 1/2 semi-circle 2 %%%%%%%%%
\draw (3,0) arc (0:180:1cm);
\draw (0.9,0) circle (0.08cm);
\fill[black] (0.9,0) circle (0.08cm);
\draw (1.1,0) circle (0.08cm);
\fill[black] (1.1,0) circle (0.08cm);
\draw (1.0,-0.3) node[scale=0.7] {$\pi_{a}$};
%%%%%%% 2nd 1/2 semi-circle 2 %%%%%%%%%
\draw (2.9,0) circle (0.08cm);
\fill[black] (2.9,0) circle (0.08cm);
\draw (3.1,0) circle (0.08cm);
\fill[black] (3.1,0) circle (0.08cm);
\draw (3.0,-0.3) node[scale=0.7] {$\pi_{a_0+a}$};
\draw (5.5,0) node {$\cdots$};
\end{tikzpicture}

\bigskip

The first double point represent the $L$-packet $\{\pi_{a_0}^+ , \pi_{a_0}^-\}$. The second and third double-points represent, respectively, the $L$-packets
$\{\pi_a^+ , \pi_a^-\}$ and $\{\pi_{a_0+a}^+ , \pi_{a_0+a}^-\}$. The second half-circle is repeated countably many times, and is parametrized by $L$-parameters $\{\phi_a\}_{a+\wp(K)}$, see theorem \ref{parametrization of L-parameters}.

\textsc{Topology on the tempered dual}.    Let $\cG = \SL_2(\F_q(\varpi)))$.   The tempered dual of $\cG$ is the disjoint union $X = X_{\cG}$ of the discrete series and the irreducible constituents in the principal series.
We equip $X$ with the following topology $\fT$ : The topology $\fT$ must induce the standard topologies on each point, each copy of $\T$, and each copy (except one) of $(\T\q W)_2$,  \emph{all of which (except one)} must become $\fT$-open sets.   On the exceptional  copy of $(\T\q W)_2$  the Steinberg point $\St_{\cG}$ must be
$\fT$-isolated.
  Then $\fT$ is a locally compact topology on $X$.   It is not Hausdorff.

In the space $X$, each $L$-packet in the unitary principal series will feature as a $\fT$-double-point.

There will be countably many double-points, one for each quadratic extension $K(\alpha)$; \emph{cf.} the diagram in \cite{Pl2} for the tempered dual of
$\SL_2(\Q_p)$ with $ p>2$.   In that diagram, there are just three double-points.    For $\SL_2(\Q_2)$ there would be seven double-points.

Each supercuspidal $L$-packet will feature as four $\fT$-isolated points in $X$.

We conjecture that $\fT$ coincides with the Jacobson topology on the primitive ideal space of the reduced $C^*$-algebra of $\cG$.

\section{Biquadratic extensions of $\F_q((\varpi))$}

Quadratic extensions $L/K$ are obtained by adjoining an $\mathbb{F}_2$-line $D\subset K/\wp(K)$. Therefore, $L=K(\wp^{-1}(D))=K(\alpha)$ where $D=span\{a+\wp(K)\}$, with $\alpha^2-\alpha=a$. In particular, if $a_0$ is integer, the $\mathbb{F}_2$-line $V_0=span\{a_0+\wp(K)\}$ contains all the cosets $a_i+\wp(K)$ where $a_i$ is an integer and so $K(\wp^{-1}(\mathfrak{o}))=K(\wp^{-1}(V_0))=K(\alpha_0)$ where $\alpha_0^2-\alpha_0=a_0$ gives the unramified quadratic extension.

Biquadratic extensions are computed the same way, by considering planes $W=span\{a+\wp(K), b+\wp(K)\}\subset K/\wp(K)$. Therefore, if $a+\wp(K)$ and $b+\wp(K)$ are $\mathbb{F}_2$-linearly independent then $K(\wp^{-1}(W)):=K(\alpha, \beta)$ is biquadratic, where $\alpha^2-\alpha=a$ and $\beta^2-\beta=b$, $\alpha, \beta\in K^s$. Therefore, $K(\alpha, \beta)/K$ is biquadratic if $b-a\not\in\wp(K)$.

A biquadratic extension containing the line $V_0$ is of the form $K(\alpha_0,\beta)/K$. There are countably many quadratic extensions $L_0/K$ containing the unramified quadratic extension. They have ramification index $e(L_0/K)=2$. And there are countably many biquadratic extensions $L/K$ which do not contain the unramified quadratic extension. They have ramification index $e(L/K)=4$.

So, there is a plentiful supply of biquadratic extensions $K(\alpha, \beta)/K$.

\subsection{Ramification}

The space $K/\wp(K)$ comes with a filtration

\medskip

\begin{equation}\label{Filtration K/P(K)}
0\subset_1 V_0\subset_f V_1=V_2\subset_f V_3=V_4\subset_f ...\subset K/\wp(K)
\end{equation}
where $V_0$ is the image of $\mathfrak{o}_K$  and $V_i$ ($i>0$) is the image of $\mathfrak{p}^{-i}$ under the canonical surjection $K\rightarrow K/\wp(K)$. For $K=\mathbb{F}_q((\varpi))$ and $i>0$, each inclusion $V_{2i}\subset_f V_{2i+1}$ is a sub-$\mathbb{F}_2$-space of codimension $f$. The $\F_2$-dimension of $V_n$ is
\begin{equation}\label{F_2 dim.}
dim_{\F_2}V_n=1+\lceil n/2 \rceil,
\end{equation}
where $\lceil x \rceil$ is the smallest integer not less than $x$.

\bigskip

Let $L/K$ denote a Galois extension with Galois group $G$. For each $i\geq -1$ we define the $i^{th}$-ramification subgroup of $G$ (in the lower numbering) to be:
$$G_i=\{\sigma\in G: \sigma(x)-x \in\mathfrak{p}_L^{i+1}, \forall x\in\mathfrak{o}_L\}.$$
An integer $t$ is a \emph{break} for the filtration $\{G_i\}_{i\geq -1}$ if $G_t\neq G_{t+1}$. The study of ramification groups $\{G_i\}_{i\geq -1}$ equivalent to the study of breaks of the filtration.

There is another decreasing filtration with upper numbering $\{G^i\}_{i\geq -1}$ and defined by the Hasse-Herbrand function $\psi=\psi_{L/K}$:
$$G^u=G_{\psi(u)}.$$
In particular, $G^{-1}=G_{-1}=G$ and $G^0=G_0$, since $\psi(0)=0$.

\bigskip

Let $G_2=Gal(K_2/K)$ be the Galois group of the maximal abelian extension of exponent $2$, $K_2=K(\wp^{-1}(K))$. Since $G_2\cong K^{\times}/K^{\times 2}$ (Proposition \ref{K*/K^*p is profinite}), the pairing $K^{\times}/K^{\times 2}\times K/\wp(K)\rightarrow \Z/2\Z$ from (\ref{A.-S- Pairing}) coincides with the pairing $G_2\times K/\wp(K)\rightarrow \Z/2\Z$.

The profinite group $G_2$ comes equipped with a ramification filtration $(G_2^u)_{u\geq -1}$ in the upper numbering, see \cite[p.409]{Da}. For $u\geq 0$, we have an orthogonal relation \cite[Proposition 17]{Da}
\begin{equation}\label{orthogonal}
(G_2^u)^{\bot}=\overline{\mathfrak{p}^{-\lceil u \rceil+1}}=V_{\lceil u \rceil-1}
\end{equation}
under the pairing $G_2\times K/\wp(K)\rightarrow \Z/2\Z$.

\bigskip
Since the upper filtration is more suitable for quotients, we will first compute the upper breaks and then use the Hasse-Herbrand function to compute the lower breaks in order to obtain the lower ramification filtration.

According to \cite[Proposition $17$]{Da}, the positive breaks in the filtration $(G^v)_v$ occur precisely at integers prime to $p$. So, for $ch(K)=2$, the positive breaks will occur at odd integers. The lower numbering breaks are also integers. If $G$ is cyclic of prime order, then there is a unique break for any decreasing filtration $(G^v)_v$ (see \cite{Da}, Proposition $14$). In general, the number of breaks depends on the possible filtration of the Galois group.

Given a plane $W\subset K/\wp(K)$, the filtration (\ref{Filtration K/P(K)}) $(V_i)_i$ on $K/\wp(K)$ induces a filtration $(W_i)_i$ on $W$, where $W_i=W\cap V_i$. There are three possibilities for the filtration breaks on a plane and we will consider each case individually.

\bigskip

\textbf{Case 1 :} $W$ contains the line $V_0$, i.e. $L_0=K(\wp^{-1}(W))$ contains the unramified quadratic extension $K(\wp^{-1}(V_0))=K(\alpha_0)$ of $K$. The extension has residue degree $f(L_0/K)=2$ and ramification index $e(L_0/K)=2$. In this case, there is an integer $t>0$, necessarily odd, such that the filtration $(W_i)_i$ looks like
$$0\subset_1 W_0=W_{t-1}\subset_1 W_{t}=W.$$

By the orthogonality relation (\ref{orthogonal}), the upper ramification filtration on $G=Gal(L_0/K)$ looks like
$$\{1\}=...=G^{t+1}\subset_1G^{t}=...=G^0\subset_1G^{-1}=G$$
Therefore, the upper ramification breaks occur at $-1$ and $t$. The lower ramification breaks can be computed using the Hasse-Herbrand function.
%$$G=G^{-1}=\mathbb{Z}/2\mathbb{Z}\times\mathbb{Z}/2\mathbb{Z} \textrm{ ; } G^0=G^1=...=G^{t}=\mathbb{Z}/2\mathbb{Z} \textrm{ ; } G^{t+1}=\{1\}$$
The table for the index of $G^u$ in $G^0$ is as follows:

\medskip

\[
\begin{tabular}{ c c c }
  %\hline
  $u\in$ & $[0, t]$ & $]t, +\infty[$ \\

  \hline \\

  $G^u=$ & $G^0$ & $\{1\}$

  \\

  $(G^0:G^u)=$ & $1$ & $2$ \\
\end{tabular}
\]

\medskip

We have, $\psi(t)=\int_0^t(G^0:G^u)du=t$, and the lower ramifications breaks occur at $-1$ and $t$. It follows that the \textbf{lower filtration} is

\begin{align}\label{ram1}
G_{-1}=G=\mathbb{Z}/2\mathbb{Z}\times\mathbb{Z}/2\mathbb{Z} \textrm{ ; } G_0=...=G_{t}=\mathbb{Z}/2\mathbb{Z} \textrm{ ; } G_{t+1}=\{1\}
\end{align}

\medskip

The number of such $W$ is equal to the number of planes in $V_t$ containing the line $V_0$ but but not contained in the subspace $V_{t-1}$. Note that this number can be computed and equals the number of biquadratic extensions of $K$ containing the unramified quadratic extensions and with a pair of upper ramification breaks $(-1,t)$, $t>0$ and odd.

\begin{ex}
The number of biquadratic extensions containing the unramified quadratic extension and with a pair of upper ramification breaks $(-1,1)$ is equal to the number of planes in an $1+f$-dimensional $\F_2$-space, containing the line $V_0$. There are precisely
$$1+2+2^2+...+2^{f-1}=\frac{1-2^f}{1-2}=q-1$$
of such biquadratic extensions.
\end{ex}

\bigskip

\textbf{Case 2.1 :} $W$ does not contains the line $V_0$ and the induced filtration on the plane $W$ looks like
$$0=W_{t-1}\subset_2 W_{t}=W$$
for some integer $t$, necessarily odd.

The number of such $W$ is equal to the  number of planes in $V_t$ whose intersection with $V_{t-1}$ is $\{0\}$. Note that, there are no such planes when $f=1$. So, for $K=\F_2((\varpi))$, \textbf{case 2.1} does not occur.

Suppose $f>1$. By the orthogonality relation, the upper ramification ramification filtration on $G=Gal(L/K)$ looks like
$$\{1\}=...=G^{t+1}\subset_2G^{t}=...=G^{-1}=G$$
Therefore, there is a single upper ramification break occur at $t>0$ and necessarily odd. The lower ramification breaks occurs at the same $t$, since we have:

\medskip

\[
\begin{tabular}{ c c c }
  %\hline
  $u\in$ & $[0, t]$ & $]t, +\infty[$ \\

  \hline \\

  $G^u=$ & $G^0$ & $\{1\}$

  \\

  $(G^0:G^u)=$ & $1$ & $2^2$ \\
\end{tabular}
\]

\medskip

and so, $\psi(t)=\int_0^t(G^0:G^u)du=t$, and the lower ramifications breaks occur at $-1$ and $t$. It follows that the \textbf{lower filtration} is

\begin{align}\label{ram2.1}
G_{-1}=G=...=G_{t}=\mathbb{Z}/2\mathbb{Z}\times\mathbb{Z}/2\mathbb{Z} \textrm{ ; } G_{t+1}=\{1\}
\end{align}

\medskip

For $f=1$ there is no such biquadratic extension. For $f>1$, the number of these biquadratic extensions  equals the number of planes $W$ contained in an $\F_2$-space of dimension $1+fi$, $t=2i-1$, which are transverse to a given codimension-$f$ $\F_2$-space.

\bigskip

\textbf{Case 2.2 :} $W$ does not contains the line $V_0$ and the induced filtration on the plane $W$ looks like
$$0=W_{t_1-1}\subset_1 W_{t_1}=W_{t_2-1}\subset_1 W_{t_2}=W$$
for some integers $t_1$ and $t_2$, necessarily odd, with $0<t_1<t_2$.

The orthogonality relation for this case implies that the upper ramification filtration on $G=Gal(L/K)$ looks like
$$\{1\}=...=G^{t_2+1}\subset_1G^{t_2}=...=G^{t_1+1}\subset_1G^{t_1}=...=G$$
The upper ramification breaks occur at odd integers $t_1$ and $t_2$.

Now, index of $G^u$ in $G^0$ is:

\medskip

\[
\begin{tabular}{ c c c c}
  %\hline
  $u\in$ & $[0, t_1]$  & $]t_1, t_2]$ & $]t_2, +\infty[$ \\

  \\

  \hline \\

  $(G^0:G^u)=$ & $1$ & $2$ & $2^2$\\
\end{tabular}
\]

\medskip

The lower breaks occur at
$$\psi(t_1)=\int_0^{t_1}(G^0:G^u)du=t_1 .$$
and at
$$\psi(t_2)=\int_0^{t_2}(G^0:G^u)du=\int_0^{t_1}(G^0:G^u)du+\int_{t_1}^{t_2}(G^0:G^u)du$$
$$\hskip -3.9cm =t_1+2(t_2-t_1)=2t_2-t_1 .$$
In this case, the lower breaks occur at $t_1$ and $2t_2-t_1$, with $0<t_1<t_2$ the odd integers where the upper ramification breaks occur.

\medskip

We conclude that the \textbf{lower filtration} is given by
\begin{align}\label{ram2}
G & =G_0=...=G_{t_1}=\mathbb{Z}/2\mathbb{Z}\times\mathbb{Z}/2\mathbb{Z}\\
G_{t_1+1} & =...=G_{2t_2-t_1}=\mathbb{Z}/2\mathbb{Z} \textrm{ ; } G_{2t_2-t_1+1}=\{1\}
\end{align}

There is only a finite number of such biquadratic extensions, for a given pair of upper breaks (or lower breaks) $(t_1,t_2)$.

\section{Formal degrees}  In this section, we are influenced by the lecture notes of Reeder \cite{Reed}, and the preceding three talks in Washington, DC.
   For $\cG = \SL_2(K)$, the dual group $G =  \SO_3(\C)$  contains a unique (up to conjugacy) subgroup  $J \simeq  \Z/2 \times \Z/2$, whose nontrivial elements are $180$-degree rotations about three orthogonal axes. One can check that the centralizer and normalizer of $J$ are given by
\[
C_G(J) = J, \quad 	N_G(J) = O
\]
 where $O \simeq S_4$ is the rotatation group of the octahedron whose vertices are the unit vectors on the
given orthogonal axes. The quotient  $O/J \simeq  \GL_2(\Z/2)$ is the full automorphism group of $J$.

Each bi-quadratic extension $L/K$ gives a surjective homomorphism
\[
\phi_L : \W_F \to  J
\] which is a discrete parameter with
$S_{\phi_L}	= J$, since $C_G(J) = J$,  and whose conjugacy class depends only on $L$, since $O/J = \Aut(J)$.

Since
\[
|S_{\phi_L}| = 4
\]
the $L$-packet $\Pi_{\phi_L}$ has $4$ constituents.  There are countably many biquadratic extensions, therefore there are countably many  $L$-packets
with $4$ constituents.

None of these packets contains the Steinberg representation $\St_{\cG}$ and so, according to Conjecture 6.1.4 in \cite{Reed}, these are all supercuspidal $L$-packets, each with $4$ elements.

Consider the principal parameter:
\[
\Ad \,\phi_0 : \W_K \times \SL_2(\C) \to \SL_2(\C) \to \PSL_2(\C) \to \Aut(\fsl_2(\C))
\]
  The \emph{adjoint gamma value} is given by
\[
\gamma(\varphi_0) = \frac{q}{1 + q^{-1}}
\]
where $q = 2^f$.

Concerning the adjoint gamma value $\gamma(\varphi)$ we have
\[
\Ad \, \varphi : \W_K \to J \to \SO_3(\C) \to^{\Ad} \Aut(\fso_3(\C))
\]
The adjoint representation of $\SO_3(\C)$ is equivalent to the standard representation of $\SO_3(\C)$ on $\C^3$ and so we replace the above sequence of maps by
\[
\Ad \, \varphi : \W_K \to J \to \SO_3(\C).
\]
For the $L$-function, we have
\[
L(\Ad\, \phi,s) = \frac{1}{1 + q^{-s}}
\]
and so we have
\[
\gamma(\phi) = \frac{2}{1 + q^{-1}}\cdot \varepsilon(\phi)
\]
where
\[
\varepsilon(\phi) = \pm q^{\alpha(\phi)/2}.
\]

Note that we have
\begin{align}
 \left |\frac{\gamma(\phi)}{\gamma(\phi_0)}\right | = \frac{2}{q} \cdot \varepsilon(\phi).
 \end{align}

Now $\alpha(\phi)$ is the Weil-Deligne version of the Artin conductor which is give here by
\[
\alpha(\phi)   =  \sum_{i \geq 0} \frac{\dim(\fg/\fg^{D_i})}{[D_0:D_i]}
\]
 see \cite{Reed}, Reeder's notation.
%\[b(\phi) = \] The Swan conductor follows the lower ramification filtration.

We have to take the cases separately, beginning with (\ref{ram1}).

\textbf{Case 1}:  We have
\[
G_{-1}=G=\mathbb{Z}/2\mathbb{Z}\times\mathbb{Z}/2\mathbb{Z} \textrm{ ; } G_0=...=G_{t}=\mathbb{Z}/2\mathbb{Z} \textrm{ ; } G_{t+1}=\{1\}
\]

We have
\[
\alpha(\phi) =  (1 + t)2
\]
According to Conjecture 6.1(1) in \cite{R}, we have
\[
\Deg(\pi_{\phi_L}(\rho)) = \frac{1}{4}\cdot \left |\frac{\gamma(\phi)}{\gamma(\phi_0)}\right | = \frac{1}{4}\cdot \frac{2}{q} \cdot |\varepsilon(\phi)|   = 2^{t-f}
\]
the \emph{canonical formal degree} of each supercuspidal constituent in the packet $\Pi_{\phi_L}$, i.e. the formal degree  w.r.t. the Euler-Poincar\'e measure on $\cG$.   If we fix the field K, then the formal degree tends to $\infty$ as the  break number $t$ tends to $\infty$.

The least allowed value of $t$ is $t = 1$.   When $t = f = 1$, the canonical formal degree of each element in the packet $\Pi_{\phi_L}$ is equal to $1$.   The lower ramification filtration is

\[
G_{-1}=G=\mathbb{Z}/2\mathbb{Z}\times\mathbb{Z}/2\mathbb{Z} \textrm{ ; } G_0 = G_1=\mathbb{Z}/2\mathbb{Z} \textrm{ ; } G_2=\{1\}
\]
and so, according to 6.1(5 ) in \cite{Reed}, the elements in this packet are not of depth zero.

\medskip

\textbf{Case 2.1}: The lower ramification filtration is
\[
G_{-1} = G = \ldots = G_t = \Z/2\Z \times \Z/2\Z \; ;  \; G_{t+1} = \{1\}
\]
We have
\begin{align*}
\alpha(\phi) & = \sum_{i \geq 0} \frac{\dim(\fg/\fg^{D_i})}{[D_0:D_i]} =  (t + 1)3\\
\end{align*}
According to 6.1(1) in \cite{Reed}, we have
\begin{align*}
\Deg(\pi_{\phi_L}(\rho)) & = \frac{1}{4}\cdot \left |\frac{\gamma(\phi)}{\gamma(\phi_0)}\right | \\
& =  \frac{1}{4}\cdot \frac{2}{q} \cdot | \varepsilon(\phi)| \\
& = \frac{1}{2q} \cdot 2^{\alpha(\phi)/2}\\
& = \frac{1}{2q} \cdot 2^{3(1 + t)/2}\\
&    =  2^{3(1 + t)/2  - f - 1 }
\end{align*}
Note that $t$ is odd, therefore the formal degree is a \emph{rational} number.

\textbf{Case 2.2}: This case admits the following lower ramification filtration:

\begin{align*}
G & =G_0=...=G_{t_1}=\mathbb{Z}/2\mathbb{Z}\times\mathbb{Z}/2\mathbb{Z}\\
G_{t_1+1} & =...=G_{2t_2-t_1}=\mathbb{Z}/2\mathbb{Z} \textrm{ ; } G_{2t_2-t_1+1}=\{1\}
\end{align*}

We have
\begin{align*}
\alpha(\phi) & = \sum_{i \geq 0} \frac{\dim(\fg/\fg^{D_i})}{[D_0:D_i]} =  (t_1 + 1)3 +  \frac{(2t_2)2}{2} = 3 + 3t_1 + 2t_2\\
\end{align*}
and, according to 6.1(1) in \cite{Reed}, we have
\begin{align*}
\Deg(\pi_{\phi_L}(\rho)) & = \frac{1}{4}\cdot \left |\frac{\gamma(\phi)}{\gamma(\phi_0)}\right | \\
& =  \frac{1}{4}\cdot \frac{2}{q} \cdot | \varepsilon(\phi)| \\
& = \frac{1}{2q} \cdot 2^{\alpha(\phi)/2}\\
& = \frac{1}{2q} \cdot 2^{3(1 + t_1)/2 + t_2}\\
&    =  2^{3(1 + t_1)/2 + t_2  - f - 1 }
\end{align*}
the canonical formal degree of each supercuspidal  in the packet $\Pi_{\phi_L}$.   If we fix $f$, then the formal degree  tends to $\infty$ as the  break numbers tend to $\infty$.

Note that $t_1$ is odd, therefore all the formal degrees are \emph{rational} numbers, in conformity with the rationality of the gamma ratio
\cite[Prop. 4.1]{GR}.

\end{document}